\newcommand{\cal}{\mathcal}
\def\epsilon{\varepsilon}
\renewcommand{\phi}{\varphi}
\DeclareMathOperator{\ind}{\mathrm{ind}}
\newcommand{\Out}{\text{Out}} 
\newcommand{\Aut}{\text{Aut}}
\newcommand{\FN}{{F}_N} 
\newcommand{\cvn}{\text{cv}_N}
\newcommand{\cvnbar}{\bar{\text{cv}}_N}
\newcommand{\R}{\mathbb R} 
\newcommand{\Z}{\mathbb Z}
\newcommand{\N}{\mathbb N}
\def\strutdepth{\dp\strutbox}
\def \ss{\strut\vadjust{\kern-\strutdepth \sss}}
\def \sss{\vtop to \strutdepth{
\baselineskip\strutdepth\vss\llap{$\diamondsuit\;\;$}\null}}
\def\strutdepth{\dp\strutbox}
\def \sst{\strut\vadjust{\kern-\strutdepth \ssss}}
\def \ssss{\vtop to \strutdepth{
\baselineskip\strutdepth\vss\llap{$\spadesuit\;\;$}\null}}
\def\strutdepth{\dp\strutbox}
\def \ssh{\strut\vadjust{\kern-\strutdepth \sssh}}
\def \sssh{\vtop to \strutdepth{
\baselineskip\strutdepth\vss\llap{$\heartsuit\;\;$}\null}}
\def\bar{\overline} 
\def\tilde{\widetilde} 
\newtheorem{thm}{Theorem}[section]
\newtheorem{cor}[thm]{Corollary} 
\newtheorem{lem}[thm]{Lemma}
\newtheorem{prop}[thm]{Proposition}
\newtheorem*{thm*}{Theorem}
\newtheorem*{prop*}{Proposition}
\newtheorem*{thm-main*}{Theorem~\ref{thm:main}}
\theoremstyle{definition} 
\newtheorem*{defn*}{Definition} 
\newtheorem{rem}[thm]{Remark}
\newtheorem*{rem*}{Remark}
\newtheorem{convention}[thm]{Convention}
\theoremstyle{remark}
\numberwithin{equation}{section}
\begin{document}

\title{Index realization for automorphisms of free groups}

\author{Thierry Coulbois and Martin Lustig}

\address{Institut de mathématiques de Marseille\\ 
Université d'Aix-Marseille\\
39, rue Frédéric Joliot Curie \\
13453 Marseille Cedex 13\\
France\\
\href{mailto:thierry.coulbois@univ-amu.fr}{\nolinkurl{thierry.coulbois@univ-amu.fr}}\\
\href{mailto:martin.lustig@univ-amu.fr}{\nolinkurl{martin.lustig@univ-amu.fr}}
}


\keywords{Free group automorphisms, Train tracks, Index realization, Gate structure}

\subjclass{20E05, 20E08, 20F65, 57R30}

\begin{abstract} 
For any surface $\Sigma$ of genus $g \geq 1$ and 
(essentially) any collection of
positive integers $i_1, i_2, \ldots, i_\ell$ with $i_1+\cdots +i_\ell
= 4g-4$ Masur and Smillie~\cite{MS} have shown that there exists a
pseudo-Anosov homeomorphism $h:\Sigma \to \Sigma$ with precisely
$\ell$ singularities $S_1, \ldots, S_\ell$ in its stable foliation
$\cal L$, such that $\cal L$ has precisely $i_k+2$ separatrices raying
out from each $S_k$.

In this paper we prove the analogue of this result for automorphisms
of a free group $\FN$, where ``pseudo-Anosov homeomorphism'' is
replaced by ``fully irreducible automorphism'' and the Gauss-Bonnet
equality $i_1+\cdots +i_\ell = 4g-4$ is replaced by the index
inequality $i_1+\cdots +i_\ell \leq 2N-2$ from \cite{GJLL}.
\end{abstract}

\maketitle

\section{Introduction}

In \cite{GJLL} for every automorphism $\Phi \in \Aut(\FN)$ of a
non-abelian free group $\FN$ of finite rank $N \geq 2$ an {\em index}
$\ind (\Phi)$ has been defined, which counts in a natural way
attracting fixed points at the Gromov boundary $\partial \FN$ and the
rank of the fixed subgroup ${\rm Fix}(\Phi)$ of $\Phi$. If ${\rm
  Fix}(\Phi) = \{1\}$, then $2\ind( \Phi) + 2$ is simply the number of
attractors of the homeomorphism $\partial \Phi:\partial\FN \to
\partial \FN$ induced by $\Phi$.

As main result of \cite{GJLL} the index inequality 
\[
\ind (\Phi) \leq N-1
\]
has been proved, which strengthens the celebrated Scott conjecture,
proved in~\cite{BH}, and also extends some well known consequences of
Nielsen-Thurston theory for surface homeomorphisms to free group
automorphisms, in particular after passing to the {\em stable index}
$\ind_{stab} (\phi$) of the associated outer automorphism $\phi$,
defined below in (\ref{eq:index-inequality-star}) as sum of $\ind
(\Phi_k)$ for suitable representatives $\Phi_k$ of a positive power of
$\phi$.

The main difference to surface homemorphisms, however, where the
analog indices always sum up to give via Gauss-Bonnet the maximal
possible value postulated in (\ref{eq:index-inequality-star}), is that
$\ind_{stab} (\phi)$ may well be strictly
smaller than $N-1$.  Ever since it has been an open question which
precise value of $\{\frac{1}{2}, 1, \frac{3}{2}, \ldots,$
$\frac{2N-3}{2}, N-1\}$ can be realized as stable index $\ind_{stab}
(\phi)$ by 
some 
$\phi \in \Out(\FN)$, in particular if one restricts to
automorphisms $\phi$ of $\FN$ which are {\em irreducible with irreducible powers (iwip)}, also called {\em fully irreducible} (see
Section~\ref{sec:prelim-CLP}).

For any given $\phi \in \Out(\FN)$ its representatives $\Phi_k \in
\Aut(\FN)$ are partitioned into {\em isogredience classes}, where
isogredient automorphisms are conjugate by inner automorphisms and
hence have conjugate $\partial \FN$-dynamics and thus equal
indices. 
It follows 
from the results of \cite{GJLL} 
that any $\Phi_k$ has a positive power $\Phi_k^{m_k}$ for which (as well as for all of its powers) the fixed subgroup and the number of attracting fixed points on $\partial \FN$ is maximal; the index of $\Phi_k^{m_k}$ will be called the {\em stable index} of $\Phi_k$ and denoted by $\ind_{stab} \Phi_k$.

The {\em stable index list} of $\phi$ is defined as the
longest sequence (up to permutation) of positive indices $\ind_{stab}
(\Phi_1),\ind_{stab} (\Phi_2),\ldots,\ind_{stab} (\Phi_\ell)$, given by pairwise
non-isogredient representatives $\Phi_k$ of some power $\phi^t$, for
any fixed $t\geq 1$. The inequalities
\begin{equation}\label{eq:index-inequality-star}
\frac{1}{2} \,\, \leq \,\, \ind_{stab} \phi := \sum_{k=1}^\ell \ind_{stab}
\Phi_k \,\, \leq \,\, N-1
\end{equation}
have been shown in \cite{GJLL}.  Handel and Mosher~\cite[Question~6 in
  \S1.5]{HM-axes} have asked explicitly which such values are realized
as stable index list by iwip automorphisms of $\FN$.  We denote such a
(potential) index list by $[j_1, \ldots, j_\ell]$, where the $j_k$ are
usually given in decreasing order.

For the ``maximal'' case, i.e.  $\ind_{stab} (\phi) = N-1$, 
an almost 
complete
answer to this question has been given by Masur and Smillie~\cite{MS}:
For $N \geq 3$ any list $[j_1, j_2, \ldots, j_\ell]$ of positive $j_k
\in \frac{1}{2} \Z$, with $\sum j_k = N-1$ (other than the single
exceptional case $[\frac 32,\frac 32]$ for $N = 4$, see
Section~\ref{sec:proof-discussion} below), can be realized as index
list of an iwip automorphism which is {\em geometric}, i.e. $\phi$ is
induced by a pseudo-Anosov homeomorphism of a surface with one
boundary component. On the other hand, if $\ind_{stab} (\phi) \leq N -
\frac{3}{2}$, then any iwip $\phi$ is known not to be geometric, 
and in particular for any representative $\Phi_k \in
\Aut(\FN)$ the fixed subgroup is trivial.
The
purpose of this paper is to show that the analogue of Masur and
Smillie's result holds also in the non-maximal case:

\begin{thm}\label{thm:main} 
Let $N \geq 3$, and let $j_1,j_2,\ldots,j_\ell$ be any list of
positive numbers from $\frac{1}{2} \Z$ which satisfy:
\[\frac{1}{2} \,\, \leq \,\,\sum_{k=1}^{\ell} j_k \,\,\leq \,\,
N-\frac{3}{2}\] Then there exists (and we give an explicit
construction) an iwip automorphism $\phi \in \Out(\FN)$ which realizes
the given list of values $j_k$ as stable index list.
\end{thm}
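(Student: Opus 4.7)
The plan is to realize the prescribed list by constructing an explicit train track representative $f : G \to G$ whose local dynamics at its principal periodic vertices encodes the indices $j_k$, and then verify that $f$ represents an iwip outer automorphism $\phi$. The underlying dictionary, from \cite{GJLL}, interprets $\ind(\Phi)$ in terms of the local gate structure of $f$: if $\Fix(\Phi)$ is trivial (the typical situation for iwip representatives), a periodic principal vertex $v$ fixed by a lift $\Phi$ with $g$ gates all fixed by the gate map contributes $\frac{g-2}{2}$ to $\ind(\Phi)$. Thus realizing $[j_1,\ldots,j_\ell]$ amounts to producing a train track map with exactly $\ell$ principal periodic vertices $v_1,\ldots,v_\ell$ having $2j_k+2$ fixed gates respectively, and no other principal vertex contributing positively.

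First, for each $j_k$ I would design a local piece: half-edges issuing from $v_k$ partitioned into $2j_k+2$ gates, together with a candidate gate map which (after passing to a power) is the identity on gates. Second, I would assemble these local pieces into a connected graph $G$ of rank $N$ using a small family of connector edges; here the slack $N-1-\sum j_k \geq \frac{1}{2}$ afforded by the theorem's hypothesis is exactly the room needed for the connectors, which must be arranged to respect the required rank count while not creating any additional periodic principal vertex. Third, I would define $f : G \to G$ extending the prescribed local gate maps and shuffling connector edges in a controlled way, verifying by direct inspection that it is a train track map (no illegal turn is ever folded, and every edge maps to a legal path) and that its transition matrix is Perron--Frobenius.

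The main obstacle will be promoting $f$ from an irreducible train track map to one representing an iwip automorphism. By the Bestvina--Handel criterion this requires (i) primitivity of some power of the transition matrix and (ii) the absence of periodic Nielsen paths. Condition (i) I would arrange combinatorially by choosing the connector edges so that some iterate of $f$ crosses every edge in the image of every other edge. Condition (ii) is far more delicate, because any periodic Nielsen path would yield an extra isogredience class and distort the computed index list; I would rule it out by designing the connectors and local gate maps so that any candidate Nielsen path must traverse a connector whose two endpoints carry incompatible gate dynamics, leveraging the criteria on legal laminations and local Whitehead graphs developed in the authors' earlier work (notably the $\CQ$-index machinery).

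Once iwip is established, the stable index list is read off directly from the local gate data at $v_1,\ldots,v_\ell$, but one still has to argue that no other isogredience class of a representative of some power $\phi^t$ contributes a positive index. This final step uses the fact that for an iwip automorphism with stable train track representative $f$, the positive-index isogredience classes of representatives of powers of $\phi$ are in bijection with the principal periodic vertices of $f$; since the construction places all such vertices among the $v_k$, this completes the realization.
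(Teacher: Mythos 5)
Your high-level plan matches the architecture of the paper: build a graph $\Gamma$ with a prescribed gate structure realizing the $j_k$ as gate indices, construct a train track morphism respecting it with primitive transition matrix and connected local Whitehead graphs, and then obtain an iwip representative with no periodic Nielsen paths so the stable index list can be read off from the periodic vertices. You also correctly identify that the real obstacle is ruling out periodic INPs, and that the hypothesis $\sum j_k \le N - \tfrac32$ supplies exactly the rank slack needed for connector edges. All of that tracks the paper's Sections 3--4 reasonably well.

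The genuine gap is in how you propose to rule out periodic INPs. You write that you would arrange ``incompatible gate dynamics'' at the endpoints of connector edges and invoke Whitehead graph and $\CQ$-index criteria, but this is not a mechanism that produces a proof. A graph map satisfying all the requirements of Section~4 (primitive transition matrix, connected gate-Whitehead-graphs, fixing every vertex and every gate) can still easily possess periodic INPs, and there is no local obstruction at a single vertex that prevents them; a Nielsen path is a global phenomenon involving the entire fold/cancellation structure of the map, not just the gate structure at endpoints. The paper's actual solution is an entirely different tool: the notion of a \emph{legalizing} train track morphism from the companion paper, defined by the requirement that some $g$ maps every sufficiently long illegal turn to a legal one, together with the result that pre- or post-composing an ``iwip-up-to-INPs'' train track morphism with a legalizing one kills all periodic INPs. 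The paper spends Section~5 constructing, for each illegal turn $t$, a specific train track morphism $g_t$ that legalizes long turns with starting turn $t$ (carefully separated into even, odd, and maximal odd cases because the gate structure at $v_1$ differs), verifies each $g_t$ is a homotopy equivalence fixing vertices and gates, and then composes them via Proposition~7.1 of the companion paper into a single legalizing map $g$. The final automorphism is represented by $h \circ g$, not by $h$ alone. None of this composition structure, nor the legalizing-map criterion it rests on, is anticipated in your sketch; without it, step (ii) of your plan is a placeholder rather than an argument, and it is precisely the step the authors single out as the hardest and most novel part of the proof.

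One smaller imprecision: you assert that for an iwip with a stable train track representative, positive-index isogredience classes of powers of $\phi$ are in bijection with principal periodic vertices of $f$. That bijection is exactly what fails when periodic INPs are present (an INP creates an identification of two branch directions in the limit tree, lowering the contribution and potentially merging isogredience classes), so it cannot be invoked as a known fact at that stage; it is a consequence of the no-INP conclusion, not an independent input. The paper handles this by quoting its Theorem~\ref{thm:quote-2}~(\ref{item:thm-quote-2-index}), which presupposes the legalizing machinery.
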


For $N=3$ the statement of the theorem had already been proved 
by 
C.~Pfaff \cite{Pfaff}. Other special cases were also known, for
example the single element list 
$[N-\frac{3}{2}]$ 
for any $N \geq 3$
(see~\cite{JL}). A further discussion, including some experimental
data obtained by the first author, is given in
section~\ref{sec:proof-discussion} below (compare also~\cite[Section
  VI]{GJLL}).
  
\begin{rem}
\label{branchin-index++}
From Theorem \ref{thm:main} one deduces directly as corollary an analogous existence statement for indecomposable $\R$-trees $T$ with free isometric $\FN$-action that have prescribed branching index list given by the numbers $j_k$. This follows directly from the material assembled in Section 8 of \cite{CL}.  The authors do not know whether such an existence statement was known previously.
\end{rem}

\medskip

Already in~\cite{GJLL} the relationship between the index of $\phi$
and the branching index of a forward limit $\R$-tree tree $T$ of
$\phi$ has been exploited (compare also~\cite{HM-axes}). If $\phi$ is
iwip, then such $T$ in the Thurston boundary $\partial \cvn$ of
(unprojectivized) Outer space $\cvn$ is unique up to rescaling, and
for non-geometric $\phi$ the isometric $\FN$-action on $T$ is free and
has dense orbits. For a suitable exponent $t\geq 1$ there is a natural
1-1 correspondence, between isogredience classes of representatives
$\Phi_k$ of $\phi^t$ with $\ind_{} (\Phi_k) >0$ on one hand, and
$\FN$-orbits of branch points $P_k$ of $T$ on the other, where $2
\ind_{stab} (\Phi_k) + 2$ is precisely equal to the number of
directions at $P_k$. An exposition of this relationship is given in
\S8 of \cite{CL}.

This correspondence can be carried one step further by using the fact
that $T$ is obtained as projective limit (in $\cvnbar = \cvn \cup
\partial \cvn$) of simplicial metric trees $\tilde \Gamma$ with free
isometric $\FN$-action, which occur naturally as universal cover of a
train track representative $f:\Gamma \to \Gamma$ of $\phi$ (see
\S\ref{sec:prelim-CLP}). Such train track representatives carry an
{\em intrinsic gate structure} which allows one to define a gate index
at every vertex of $\Gamma$ and a gate index list by considering all
periodic vertices of $\Gamma$ with 3 or more gates. There is a natural
relationship between the gates of $\tilde \Gamma$ and the branching
directions of $T$, which in the absence of so called {\em periodic
  INPs} (see \S\ref{sec:prelim-CLP} below) becomes a 1-1
correspondence. Again, see \S8 of \cite{CL} for more details.

The problem of realizing a given list $[j_1,j_2,\ldots, j_\ell]$ as in
Theorem~\ref{thm:main} as stable index list of an iwip automorphism
can hence be subdivided into the following subproblems:
\begin{enumerate}
\item\label{problem:graph} Construct a graph $\Gamma$ with vertices
  $v_1, \ldots, v_\ell$ and define a gate structure $\mathbf G$ on
  $\Gamma$ which realizes the given list of the values $j_k$ as gate
  indices at the vertices $v_k$.
\item\label{problem:iwip-but-inp} Define a map $h: \Gamma \to \Gamma$
  which respects the gate structure $\mathbf G$ and is ``iwip up to
  INPs''.
\item\label{problem:control-inp} Control the periodic INPs of $h$.
\end{enumerate}
Subproblems~(\ref{problem:graph}) and (\ref{problem:iwip-but-inp}) are
solved below in sections~\ref{sec:graph} and
\ref{sec:map-iwip-but-inp}. Subproblem~(\ref{problem:control-inp}),
which is the hardest and conceptually the most interesting, requires a
new tool, called {\em long turns}, which has been provided and
investigated by the first two authors in the ``companion
paper''~\cite{CL}.  In section~\ref{sec:legalizing-map} we give a
brief summary of this method and provide the concrete tools that allow
us in section~\ref{proofs} to apply the results of \cite{CL} in order
to obtain a {\em legalizing} train track morphism $g: \Gamma \to
\Gamma$. It is then shown how Theorem~1.1 of \cite{CL} (quoted in
section~\ref{proofs} in an appropriate version) can be applied to
solve the left-over Subproblem~(\ref{problem:control-inp}) for the
resulting train track map $h\circ g$.

\smallskip

\noindent{\em Acknowledgements:} This paper was intended by the
authors to be joint with Catherine Pfaff: a large part of it is rooted
in our weekly discussions with Catherine, during the months before she
left Marseille. We regret that she declined despite our insistence to
be coauthor of the paper.

We also would like to point the reader to the thesis work of Sonya
Leibman~\cite{leibman}, which came only very recently to our
attention. Some of her results seem to be very interesting to the
context of the work presented here; in particular, there is an overlap
of the results of her section 5.2 (Lemma 5.4) and our subsection 7.1
below.

\section{Preliminaries}
\label{sec:prelim-CLP}

We will use in this paper the same terminology as set up in sections~2
and 3 of \cite{CL}:

A {\em graph} $\Gamma$ is always connected, without vertices of
valence $1$ or $2$, and moreover it is finite, unless it is the
universal covering of a finite connected graph.  The {\em edges}
$E^\pm(\Gamma)$ of $\Gamma$ come in pairs $e, \bar e$ which differ
only in their orientation, and $E^+(\Gamma)$ contains precisely one of
the two elements in each pair.

A {\em gate structure} $\mathbf G$ on $\Gamma$ is a partition of the
edges $e \in E^\pm(\Gamma)$ into equivalence classes $\mathfrak g_i$
(called {\em gates}), where equivalent edges must have the same
initial vertex $v$.  Two edges $e, e' \in E^\pm(\Gamma)$ with same
initial vertex form a {\em turn} $(e, e')$, which is called {\em
  legal} (with respect to $\mathbf G$) if $e$ and $e'$ belong to
distinct gates, and {\em illegal} if they belong to the same gate. The
turn $(e, e')$ is called {\em degenerate} if $e = e'$.

A path $\gamma = e_1 e_2 \ldots e_q$ {\em crosses over} a {\em gate
  turn} $(\mathfrak g_i, \mathfrak g_j)$ if for some $k \in \{1,
\ldots, q-1\}$ one has $\bar e_k \in \mathfrak g_i, e_{k+1} \in
\mathfrak g_j$ or $\bar e_k \in \mathfrak g_j, e_{k+1} \in \mathfrak
g_i$.  The path $\gamma$ is {\em legal} if, for each $k \in \{1,
\ldots, q-1\}$, the edges $\bar e_k$ and $e_{k+1}$ belong to different
gates of $\mathbf G$ (i.e. $\gamma$ crosses only over legal turns).

The {\em gate index} $\ind_{\mathbf G}(v)$ at a vertex $v$ is given by
$\ind_{\mathbf G}(v) := \frac{g(v)}{2} - 1$, where $g(v)$ denotes the
number of gates at $v$.

A {\em graph map} $f: \Gamma \to \Gamma$ maps vertices to vertices and
edges to (possibly unreduced) edge paths. The map $f$ has {\em no
  contracted edges} if for any edge $e$ of $\Gamma$ the combinatorial
length (= number of edges traversed) of $f(e)$ satisfies $|f(e)|\geq 1$. In this
case $f$ induces a well defined map $Df$ on $E^\pm(\Gamma)$ which maps
the edge $e$ to the initial edge of the path $f(e)$.

The {\em transition matrix} $M(f) = (m_{e', e})_{e', e \in
  E^+(\Gamma)}$ of $f$ is defined as non-negative matrix, where
$m_{e', e}$ counts the number of times that $f(e)$ crosses over $e'$
or over $\bar e'$. The equality
\[
M(f\circ g) = M(f)M(g)
\] 
is a direct consequence of the definition of the transition matrix.
Recall that a non-negative matrix $M$ is called {\em primitive} if
some positive power $M^t$ is {\em positive}, i.e. all coefficients of
$M^t$ are strictly positive.

A graph map $f: \Gamma \to \Gamma$ is a {\em train track morphism},
with respect to a given gate structure $\mathbf G$ on $\Gamma$, if it
has no contracted edges, and if $f$ maps every legal path to a legal
path.  It is shown in~\cite{CL} that a train track morphism has the
additional property that at every periodic vertex $v$ of $\Gamma$ any
illegal turn is mapped to an illegal turn, or equivalently: $f$
induces at every periodic vertex $v$ of $\Gamma$ a bijective map from
the gates at $v$ to the gates at $f(v)$. Note that in this paper all
train track morphisms that occur have only periodic vertices; indeed,
each vertex is a fixed point.

For a graph $\Gamma$ without preassigned gate structure, a {\em train
  track map} $f:\Gamma\to\Gamma$ in the classical sense as defined by
Bestvina and Handel~\cite{BH} (and hence in particular any {\em train
  track representative} of a given automorphism of $\FN$) is a graph
map with no contracted edges with the property that for any $t>0$ and
any edge $e$, $f^t(e)$ is a reduced path.

As legal paths are reduced, any train track morphism $f:\Gamma \to
\Gamma$ is always a classical train track map. Conversely, every
classical train track map $f:\Gamma \to \Gamma$ is a train track
morphism with respect to the {\em intrinsic} gate structure $\mathbf
G(f)$ on $\Gamma$, defined by $f$ through declaring two edges $e, e'$
with same initial vertex to lie in the same gate if and only if for
some $t \geq 1$ the edge paths $f^t(e)$ and $f^t(e')$ have non-trivial
initial subpaths in common.  Notice however that, for a train track
morphism $f$ with respect to some gate structure $\mathbf G$, the
intrinsic gate structure $\mathbf G(f)$ may be strictly finer than the
given gate structure $\mathbf G$.

A reduced path $\gamma \circ \gamma'$ in $\Gamma$ is a {\em periodic
  INP} for a train track morphism $f: \Gamma \to \Gamma$ if $\gamma$
and $\gamma'$ are legal and for some $t \geq 1$ the path $f^t(\gamma
\circ \gamma')$ is homotopic relative endpoints to $\gamma \circ
\gamma'$.

The {\em gate-Whitehead-graph} $Wh_{\mathbf G}^v(f)$ of a train track
morphism $f: \Gamma \to \Gamma$ at a vertex $v$ of $\Gamma$ has the
gates $\mathfrak g_i$ of $\mathbf G$ at $v$ as vertices and a
(non-oriented) edge connecting $\mathfrak g_i$ to $\mathfrak g_j$ if
and only for some edge $e$ of $\Gamma$ and some integer $t \geq 1$ the
edge path $f^t(e)$ crosses over the gate turn $(\mathfrak g_i,
\mathfrak g_j)$.

Recall also that an automorphism $\phi \in \Out(\FN)$ is called {\em
  iwip} (or {\em fully irreducible}) if no positive power of $\phi$
fixes the conjugacy class of any proper free factor of $\FN$.

\section{The graph $\Gamma$ with prescribed index list}\label{sec:graph}

In this and the following sections, let $N, \ell$ and $j_1,\ldots,
j_\ell$ be given as in Theorem~\ref{thm:main}.  In this section we
will build a graph $\Gamma$ with $\pi_1\Gamma \cong \FN$ which has
precisely $\ell$ vertices $v_1, \ldots, v_\ell$, and has at each
vertex $v_k$ precisely $i_k := 2j_k + 2 \geq 3$ gates: the gate
structure $\mathbf G$ on $\Gamma$ realizes the given list
$j_1,\ldots,j_\ell$ as gate index list.

Note that from the inequalities in Theorem~\ref{thm:main} we obtain 
\[1 \leq i_1+\cdots+i_\ell - 2 \ell\leq 2N-3\]
as initial assumption on the number of gates in $\Gamma$.

We divide the possible index lists in three different cases: 
\begin{enumerate}
\item The {\em even case}: $i_1+\cdots+i_\ell$ is even (that is to say $j_1+\cdots+j_\ell$ is an integer 
$\leq N-2$).
\item The {\em odd case} (non-maximal): $i_1+\cdots+i_\ell$ is odd and smaller than $2N-4 + 2 \ell$ 
(alternatively:  $j_1+\cdots+j_\ell \leq N-2$).
\item The {\em maximal odd case}: $i_1+\cdots+i_\ell=2N-3 + 2 \ell$ 
(i.e. $j_1+\cdots+j_\ell=N-\frac 32$).
\end{enumerate} 

We consider a circle which is subdivided at vertices labeled
$v_1,\ldots, v_\ell$, to obtain oriented edges labeled
$c_1,\ldots,c_\ell$ such that $c_k$ starts at $v_k$ and ends at
$v_{k+1}$ (for $k$ understood modulo $\ell$).  Note that if $\ell = 1$
then $c_1 = c_\ell$ is a loop edge at the sole vertex $v_1$ of
$\Gamma$.

At each vertex $v_k$ we add $i_k-2$ germs of edges to this circle. In
the odd and maximal odd cases we remove one of these germs at $v_1$
such that in any cases the number of germs is even. We group these
germs arbitrarily into pairs to form $r$ oriented edges
$b_1,\ldots,b_r$.  Here $r$ is the largest integer $\leq
j_1+\cdots+j_\ell$, with $r=0$ exactly if the index list is equal to
$[\frac 12]$.

In the even and odd cases let $s=N-r-1$, where we note that $s\geq 1$.
We add $s$ oriented edges $a_1,\ldots,a_s$ which are loops at the
vertex $v_1$.

In the maximal odd case we set $s=1$ and add a single edge $a_1$ which
is a loop at $v_1$.

Finally, in the odd case we add an extra edge $d$ which is a loop at
$v_1$.

\begin{figure}[h]
\input{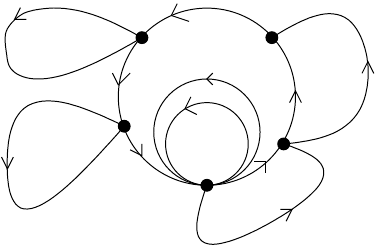_t} 
\caption{\label{fig:graph-even} Even case: a graph $\Gamma$ with index list $[\frac 12,1,\frac 12,1,1]$ and $N=7$}
\end{figure}

\begin{figure}[h]
\input{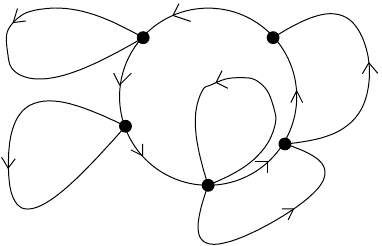_t} 
\caption{\label{fig:graph-maximal} Maximal odd case: a graph $\Gamma$
  with index list $[1,1,\frac 12,1,1]$ and $N=6$}
\end{figure}

\begin{figure}[h]
\input{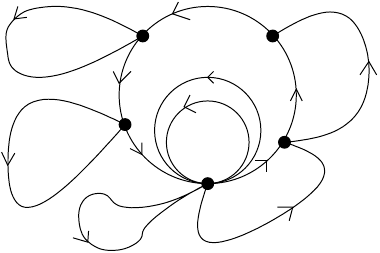_t} 
\caption{\label{fig:graph-odd} Odd case: a graph $\Gamma$ with index
  list $[1,1,\frac 12,1,1]$ and $N=8$}
\end{figure}

The graphs $\Gamma$ defined above are connected, without vertices of
valence $1$ or $2$, and have fundamental group $\FN$. The {\em
  oriented edge set} is given by $E^+(\Gamma) = \{c_1, \ldots, c_\ell,
b_1, \ldots, b_r, a_1, \ldots a_s\}$ in the even and maximal odd
cases, and by $E^+(\Gamma) = \{c_1, \ldots, c_\ell, b_1, \ldots, b_r,
a_1, \ldots a_s,d\}$ in the odd case. In all cases we have $\ell,s\geq
1$ and $r \geq 0$, with $r = 0$ if and only if we are in the odd case
with index list $[\frac 12]$.

We define the gate structure $\mathbf G$ on $\Gamma$ in such a way
that each gate consists of a single edge, except for the following
gates, all based at $v_1$:
\begin{itemize}
\item in the even case: $\mathfrak g_1=\{c_1,a_1,\ldots a_s\}$,
  $\mathfrak g_2=\{\bar{c}_\ell,\bar{a}_1,\ldots,\bar{a}_s\}$;
\item in the odd case: $\mathfrak g_1=\{c_1,a_1,\ldots, a_s\}$,
  $\mathfrak g_2=\{\bar{c}_\ell,\bar{a}_1,\ldots,\bar{a}_s\}$ and
  $\mathfrak g_3=\{d,\bar{d}\}$;
\item in the maximal odd case: $\mathfrak g_1=\{c_1,a_1\}$.
\end{itemize}
Notice that in the maximal odd case $\bar a_1$ and $\bar c_\ell$
belong to distinct gates.

As a consequence, at every vertex $v_k$ there are precisely $i_k$
gates, so that we obtain:

\begin{prop}
\label{prop:graph-index-realization}
The gate structure $\mathbf G$ on $\Gamma$ realizes the given list of
values $j_1,\ldots, j_\ell$ as gate indices at the vertices $v_1,
\ldots, v_\ell$ of $\Gamma$.  \qed
\end{prop}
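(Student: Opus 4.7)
The plan is to verify the proposition by a direct count: at each vertex $v_k$, I would show that the number $g(v_k)$ of gates of $\mathbf G$ equals $i_k = 2 j_k +2$, so that by definition $\ind_{\mathbf G}(v_k) = \frac{g(v_k)}{2}-1 = j_k$. Since the gate structure is non-trivial (i.e.\ non-singleton) only at $v_1$, the counting splits naturally into two cases: the ``generic'' vertices $v_k$ with $k \geq 2$, and the special vertex $v_1$, at which the three construction cases (even, odd, maximal odd) must be handled separately.

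First I would handle the easy vertices. At each $v_k$ with $k\geq 2$, the edges of $\Gamma$ with initial point $v_k$ consist of the two circle edges ($c_k$ and $\bar c_{k-1}$) together with the $i_k - 2$ germs attached at $v_k$ (now paired into halves of $b$-edges). The gate structure at $v_k$ is the discrete partition, so each of these $i_k$ edges is its own gate, giving $g(v_k) = i_k$, as required.

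Next I would deal with $v_1$, which is the one place where the bookkeeping is non-trivial because of the loops $a_1,\ldots,a_s$ (and, in the odd case, $d$) and the identifications made when defining $\mathfrak g_1$, $\mathfrak g_2$ (and possibly $\mathfrak g_3$). In the \emph{even case}, the edges at $v_1$ are $c_1,\bar c_\ell$, the $i_1 - 2$ germs coming from the $b$-edges, and the $2s$ germs from the loops $a_1,\ldots,a_s$; the grouping $\mathfrak g_1 = \{c_1, a_1,\ldots,a_s\}$ and $\mathfrak g_2 = \{\bar c_\ell, \bar a_1,\ldots,\bar a_s\}$ absorbs exactly $2(s+1)$ of these edges into $2$ gates, leaving $i_1 - 2$ singleton gates, for a total of $i_1$. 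In the \emph{odd case}, one germ is removed and the loop $d$ is added, contributing $i_1 - 3$ singletons plus the three non-singleton gates $\mathfrak g_1,\mathfrak g_2,\mathfrak g_3$, again totaling $i_1$. In the \emph{maximal odd case}, the only grouping is $\mathfrak g_1 = \{c_1,a_1\}$; the remaining edges $\bar c_\ell$, $\bar a_1$ and the $i_1-3$ $b$-germs are singletons, again summing to $i_1$.

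There is no real obstacle, only the possibility of an off-by-one error: the three cases differ by whether the parity correction at $v_1$ (removing one germ, adding the loop $d$, or truncating to the single loop $a_1$) is consistent with the formulas $i_1 = 2 j_1 + 2$ and $r = \lfloor\sum j_k\rfloor$. I would therefore double-check, as a side computation, that these choices are compatible with $\pi_1 \Gamma \cong \FN$ (i.e.\ $\ell+r+s+1 = N$ in the even and maximal odd cases, and $\ell + r + s + 2 = N$ in the odd case), since this is the arithmetic statement implicitly used to justify that the construction is well-defined. Once these counts are in place the proposition is immediate.
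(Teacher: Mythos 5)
Your main counting argument is correct and matches the paper's (essentially implicit) proof, which is just the observation preceding the proposition that each vertex $v_k$ carries exactly $i_k$ gates; your vertex-by-vertex, case-by-case verification of that count is accurate in all three cases.

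However, the side check you propose contains an arithmetic slip: the rank of $\pi_1\Gamma$ is (number of edges) $-$ (number of vertices) $+1$, and with $\ell$ vertices and $\ell+r+s$ edges (plus one extra edge $d$ in the odd case) the correct identities are $r+s+1=N$ in the even and maximal odd cases and $r+s+2=N$ in the odd case; your extra $+\,\ell$ is spurious (it is cancelled by the $\ell$ vertices). Carrying out the corrected check in the odd case actually exposes a discrepancy in the paper: Section~\ref{sec:graph} declares $s=N-r-1$ for both even and odd cases, but $r+s+2=N$ forces $s=N-r-2$ in the odd case, and indeed Figure~\ref{fig:graph-odd} is drawn with $N=8$, $r=4$ and only $s=2$ loops $a_i$, consistent with $s=N-r-2$. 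This typo does not affect the proposition (the gate count at each vertex is independent of $s$'s exact value) nor the validity of your gate count, but the side check as you wrote it would not have passed, and done correctly it reveals something worth flagging.
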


The following will be used crucially in the subsequent sections:

\begin{lem}\label{lem:hamiltonian-circuit}
Let $\Gamma$ be the graph equipped with the gate structure $\mathbf G$
as defined above.
\begin{enumerate}
\item\label{item:ueuprime} For each edge $e\neq a_1$ in $E^+(\Gamma)$
  there exists a legal loop $ ueu'$ in $\Gamma$ which starts in
  $\mathfrak g_1$ does not end in $\mathfrak g_1$, does not pass
  through $a_1, \bar a_1$ or $\bar e$ and passes exactly once through
  $e$ (we allow $u$ or $u'$ to be trivial).
\item\label{item:v} For each gate turn $t=({\mathfrak g},{\mathfrak
  g}')$, except for gate turns involving $\{\bar{a}_1\}$ in the
  maximal odd case, there exists a legal loop $v$ which starts in
  ${\mathfrak g}_1$, does not end in ${\mathfrak g}_1$, does not pass
  through $a_1$ or $\bar a_1$, and crosses over the gate turn
  $({\mathfrak g},{\mathfrak g}')$.
\item\label{item:alphabeta} In the even and odd cases, for any edge
  $e$ in $\mathfrak g_1$ there exist legal paths $\alpha$ and $\beta$
  that do not pass through any of the $a_i$ or through $e$ (and
  neither through their inverses).  Furthermore, $e\alpha$ is a legal
  path which ends in $\mathfrak g_2$, and $\beta$ is a legal loop
  based at $v_1$ that does not start in $\mathfrak g_1$ or $\mathfrak
  g_2$, and does not end in $\mathfrak g_1$. We allow $\alpha$ to be
  trivial.
\item\label{item:alphabetaprime} Symmetrically, in the even and odd
  cases, for each edge $\bar{e}$ in $\mathfrak g_2$ there exist legal
  paths $\alpha'$ and $\beta'$ that do not pass through any of the
  $a_i$ or through $e$ (nor through their inverses), such that
  $\alpha'e$ is legal and starts in $\mathfrak g_1$, while the legal
  loop $\beta'$ is based at $v_1$ but does not start in $\mathfrak
  g_2$ and does not end in $\mathfrak g_1$ or $\mathfrak g_2$. We
  allow $\alpha'$ to be trivial.
\end{enumerate}
\end{lem}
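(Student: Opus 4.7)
The plan is to build every required path explicitly, using the Hamiltonian loop $\gamma := c_1 c_2 \cdots c_\ell$ as a backbone. Here $\gamma$ is legal because each internal turn $(\bar c_k, c_{k+1})$ joins distinct singleton gates, it starts with $c_1 \in \mathfrak g_1$, and it ends with $c_\ell$ whose reverse $\bar c_\ell$ is not in $\mathfrak g_1$ in any of the three cases. A recurring observation is that whenever both edges of a turn lie in singleton gates---which is the case for the overwhelming majority of turns in $\Gamma$---legality is automatic, so most constructions below reduce to verifying legality only at the few turns involving $\mathfrak g_1$, $\mathfrak g_2$, or $\mathfrak g_3$ at $v_1$.

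For part~(\ref{item:ueuprime}) I split on the type of $e \neq a_1$. If $e = c_i$, set $u = c_1 \cdots c_{i-1}$ and $u' = c_{i+1}\cdots c_\ell$, so $ueu' = \gamma$. If $e = b_j$ with endpoints $v_{k_1}, v_{k_2}$ (oriented $v_{k_1} \to v_{k_2}$), take $u = c_1 \cdots c_{k_1-1}$ and $u' = c_{k_2}\cdots c_\ell$ when $k_1 \neq 1$ (with $u'$ trivial if $k_2 = 1$); when $k_1 = 1$ prepend the full cycle $\gamma$ to return to $v_1$ before crossing $e$. If $e = a_i$ with $i \ge 2$, take $u$ trivial and $u' = \gamma$; the splicing turn $(\bar a_i, c_1)$ at $v_1$ is $(\mathfrak g_2, \mathfrak g_1)$ and is legal. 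If $e = d$ (odd case), take $u = \gamma$ and $u'$ trivial; the splicing turn $(\bar c_\ell, d)$ is $(\mathfrak g_2, \mathfrak g_3)$, hence legal. Part~(\ref{item:v}) is handled similarly: outside the excluded case one can choose representatives $f \in \mathfrak g$, $f' \in \mathfrak g'$ with $f, f' \notin \{a_1, \bar a_1\}$ (this is precisely what the exclusion hypothesis guarantees, since in the maximal odd case $\bar a_1$ is alone in its gate), and one constructs a legal path from $v_1$ to the common vertex $v$ ending with $\bar f$, appends $f'$, and returns to $v_1$ along an arc of $\gamma$; when $v = v_1$ one first traverses enough of $\gamma$ to re-enter $v_1$ through $\bar c_\ell$, which avoids any degenerate turn.

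For parts~(\ref{item:alphabeta}) and~(\ref{item:alphabetaprime}) I take $\alpha = c_2 \cdots c_\ell$ if $e = c_1$ and $\alpha = c_1 c_2 \cdots c_\ell$ if $e = a_i$; in either case $e\alpha$ terminates via $c_\ell$ whose reverse lies in $\mathfrak g_2$, and the initial turn is between $\mathfrak g_2$ and $\mathfrak g_1$, hence legal. The loop $\beta$ uses a gate at $v_1$ other than $\mathfrak g_1, \mathfrak g_2$: this always exists, either as $\mathfrak g_3$ in the odd case or as a $b$-germ at $v_1$, whose existence in the even case is forced by $i_1 \ge 3$; $\beta$ starts with the corresponding edge and, if necessary, returns to $v_1$ along a tail of $\gamma$. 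Part~(\ref{item:alphabetaprime}) follows by symmetry, exchanging the roles of $\mathfrak g_1$ and $\mathfrak g_2$ and of $c_1$ and $\bar c_\ell$. The main obstacle throughout is bookkeeping rather than any single deep step: the simultaneous demands of starting in $\mathfrak g_1$, not ending in $\mathfrak g_1$, avoiding $a_1, \bar a_1$ (and in (\ref{item:alphabeta})--(\ref{item:alphabetaprime}) every $a_i$ and $e$), together with the prohibition of degenerate turns at $v_1$, interact subtly in the low-complexity configurations such as $s = 1$ or few $b$-germs at $v_1$. The hypotheses $i_1 \ge 3$, $s \ge 1$, and the presence of $d$ in the odd case are exactly what guarantee that an admissible splicing is available in every configuration, while the excluded maximal-odd case in~(\ref{item:v}) is intrinsically impossible because any turn involving $\{\bar a_1\}$ requires crossing $\bar a_1$ itself.
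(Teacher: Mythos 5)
Your proof follows the same strategy as the paper: use the circle $c_1\cdots c_\ell$ as a legal backbone, splice in the extra $b$, $a_i$, or $d$ edges at controlled turns, and exploit the facts that every vertex has $\geq 3$ gates and that only $v_1$ carries non-singleton gates. The paper leaves parts~(1) and~(2) entirely to the reader (``easy to verify''), so your explicit case-split for~(1) is welcome and correct; for parts~(3)--(4) you make slightly different but equally valid choices (e.g.\ $\alpha = c_1\cdots c_\ell$ instead of $\alpha$ trivial when $e=a_i$).

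One small imprecision in your sketch of part~(2): the recipe ``path from $v_1$ ending with $\bar f$, append $f'$, return along $\gamma$'' does not literally apply when $f = c_1 \in \mathfrak g_1$ and the gate turn is at $v_1$ (and especially when $\ell=1$, where traversing $\gamma$ and then $\bar f = \bar c_\ell$ creates the degenerate turn $(\bar c_\ell, \bar c_\ell)$). The fix is easy and in the spirit of your argument --- cross the gate turn in the other direction, arriving at $v_1$ via $\bar f'$ and leaving via $c_1$, so that the splicing turn is $(f', c_1)$ with $f'\in\mathfrak g'$ and $c_1\in\mathfrak g_1$ --- but it is worth saying explicitly, since only the clockwise arc of the circle starts in $\mathfrak g_1$, so the symmetry between the two arcs that the paper invokes is slightly broken by the requirement that the loop start in $\mathfrak g_1$.
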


\begin{proof}
The above statements~(\ref{item:ueuprime}) and (\ref{item:v}) are easy
to verify if one keeps in mind that at every vertex there are $\geq 3$
gates, and that every vertex $v_k$ can be reached from $v_1$ by any
one of two disjoint paths on the circle $c_1\cdots c_\ell$, so that it
is easy to avoid any given edge in $\Gamma$.

Concerning statement~(\ref{item:alphabeta}), if $e\neq c_1$ or $\ell=1$, we let $\alpha$
be trivial. Otherwise we set $\alpha=c_2\cdots c_\ell$.  In the odd
case we let $\beta=d$. In the even case, there is at least one edge
$b_k$ (or $\bar b_k$) exiting from $v_1$. Let $v_{k'}$ be the endpoint
of $b_k$ (or of $\bar b_k$), and set $\beta=b_k$ if $k'=1$ and
$\beta=b_{k}c_{k'}\cdots c_\ell$ otherwise.

For statement~(\ref{item:alphabetaprime}) the paths $\alpha'$ and
$\beta'$ can be chosen symmetrically to $\alpha$ and $\beta$ in the
above case~(\ref{item:alphabeta}).
\end{proof}

\section{The train track morphism}
\label{sec:map-iwip-but-inp}

In this section we construct for the graph $\Gamma$ a train track
morphism $h: \Gamma \to \Gamma$ with respect to the gate structure
$\mathbf G$ specified in the last section.  The morphism $h$ will be
``fully irreducible up to INPs'' in that it has primitive transition
matrix $M(h)$ and connected gate-Whitehead-graphs at every vertex
(compare \cite[Propositions~4.1 and~4.2]{CL}).

Below we will consider graph maps $f: \Gamma\to\Gamma$ with the
following properties:

$(*)$\parbox{.8\textwidth}{\begin{enumerate}
\item\label{*:homot-eq} $f$ is a homotopy equivalence,
\item\label{*:train-track} $f$ is a
 train track morphism with respect to the gate structure $\mathbf G$, 
\item\label{*:fix-vertex} $f$ fixes each vertex of $\Gamma$,
\item\label{*:fix-gate} $f$ fixes each gate of $\mathbf G$, and
\item\label{*:e-image} the $f$-image of every edge $e$ crosses over $e$.
\end{enumerate}
}

\begin{lem}\label{lem:property-star}
Let $f_1$ and $f_2$ be graph maps which satisfy the above
Properties~$(*)$.
\begin{enumerate}
\renewcommand{\theenumi}{\alph{enumi}}
\item\label{item:composition-**} Then the composition $f_1 \circ
  f_2$ satisfies $(*)$ as well.
\item\label{item:composition-iwg} Moreover, for any vertex $v_k$
  of $\Gamma$ the gate-Whitehead-graph $Wh_{\mathbf G}^{v_k}(f_1 \circ
  f_2)$ contains both gate-Whitehead-graphs $Wh_{\mathbf
    G}^{v_k}(f_1)$ and $Wh_{\mathbf G}^{v_k}(f_2)$ as subgraphs.
\end{enumerate}
\end{lem}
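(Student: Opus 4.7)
The two parts are largely independent: (a) is a routine verification of the five conditions in $(*)$ for $g := f_1 \circ f_2$, while (b) hinges on a single descent observation about gate-Whitehead-graphs. I would handle (a) first so that $Wh_{\mathbf G}^{v_k}(g)$ is well-defined.

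For (a), conditions (1)--(3) follow directly from their analogues for $f_1$ and $f_2$: since $f_2$ maps legal paths to legal paths, the factorization $g(e) = f_1(d_1)f_1(d_2)\cdots f_1(d_q)$ with $f_2(e) = d_1\cdots d_q$ involves no cancellation, and $g(e)$ is itself legal because $f_1$ sends the legal path $f_2(e)$ to a legal path. For condition (4), note that $Dg(e) = Df_1(Df_2(e))$, and applying (4) of $(*)$ first to $f_2$ and then to $f_1$ keeps the derivative in the gate of $e$. For condition (5), (5) of $(*)$ applied to $f_2$ gives $f_2(e) = u_1\cdot e^{\varepsilon}\cdot u_2$, so $g(e)$ contains $f_1(e)^{\varepsilon}$ as a subpath, and $f_1(e)$ in turn contains $e^{\pm 1}$ by (5) applied to $f_1$.

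The core of (b) is the reduction: for any $f$ satisfying $(*)$, an edge $(\mathfrak g_i, \mathfrak g_j)$ lies in $Wh_{\mathbf G}^v(f)$ if and only if some single-step image $f(e)$ crosses this gate turn. The non-trivial direction is a descent on $t$: writing $f^t(e) = f(d_1)\cdots f(d_q)$ with $f^{t-1}(e) = d_1\cdots d_q$, any turn $(\bar c_m, c_{m+1})$ of $f^t(e)$ realizing $(\mathfrak g_i, \mathfrak g_j)$ is either internal to some $f(d_k)$ (so $f(d_k)$ crosses the gate turn directly) or forms a seam between $f(d_k)$ and $f(d_{k+1})$; in the seam case $\bar c_m = Df(\bar d_k)$ and $c_{m+1} = Df(d_{k+1})$ lie in the same gates as $\bar d_k$ and $d_{k+1}$ by (4) of $(*)$, so the turn $(\bar d_k, d_{k+1})$ of $f^{t-1}(e)$ realizes the same gate turn and the induction continues.

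Granted this reduction applied to $f_1$ and to $f_2$, both inclusions in (b) become one-step verifications. If $f_1(e)$ crosses a gate turn $\tau$, then $g(e) = f_1(f_2(e))$ contains $f_1(e)^{\varepsilon}$ as above and hence crosses $\tau$, giving $\tau \in Wh_{\mathbf G}^{v_k}(g)$ and thus $Wh_{\mathbf G}^{v_k}(f_1) \subseteq Wh_{\mathbf G}^{v_k}(g)$. Symmetrically, if $f_2(e)$ crosses $\tau$ at an internal turn $(\bar d_k, d_{k+1})$, then $g(e) = \cdots f_1(d_k) f_1(d_{k+1}) \cdots$ has at the corresponding seam the turn $(Df_1(\bar d_k), Df_1(d_{k+1}))$, which by (4) of $(*)$ for $f_1$ lies in the same gates as $(\bar d_k, d_{k+1}) = \tau$. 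The main obstacle is identifying this one-step reduction; once it is in place, both inclusions follow directly from (5) of $(*)$ for $f_2$ and (4) of $(*)$ for $f_1$ respectively.
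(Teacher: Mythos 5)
Your proof is correct. For part~(a) the paper simply declares the verification a ``direct consequence of the definition of properties $(*)$'', and your edge-by-edge check of conditions (1)--(5) is exactly the routine verification they have in mind; in particular your treatment of (5), using that $f_2(e)$ contains $e^{\pm 1}$ so that $g(e)=f_1(f_2(e))$ contains $f_1(e)^{\pm 1}$ and hence $e^{\pm 1}$, is the right chain. For part~(b) the paper does not give an argument at all: it cites Proposition~3.10 of the companion paper~[CL], proved there under slightly weaker hypotheses. Your proof is therefore a genuine self-contained alternative.

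The key step you isolate --- that for a map $f$ satisfying $(*)$, a gate turn appears in $Wh_{\mathbf G}^v(f)$ if and only if it is crossed by some \emph{single-step} image $f(e)$ --- is the real content, and it is closely related to the remark the authors make immediately after the lemma: conditions~(3) and (4) of $(*)$ force such $f$ to act as the identity on gate turns, which (they note) even upgrades the inclusion of (b) to the equality $Wh_{\mathbf G}^{v_k}(f_1\circ f_2)=Wh_{\mathbf G}^{v_k}(f_1)\cup Wh_{\mathbf G}^{v_k}(f_2)$. Your descent argument (splitting a turn of $f^t(e)$ into either an interior turn of some $f(d_k)$ or a seam turn $(Df(\bar d_k), Df(d_{k+1}))$, and using~(4) to recognize the seam as a gate turn already crossed by $f^{t-1}(e)$) is exactly what makes that remark precise, and once you have it, both inclusions of~(b) do reduce to one-step checks using (5) for $f_2$ and (4) for $f_1$, as you say. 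In short: same idea as the paper's surrounding remark and cited proposition, but you actually carry it out; what it buys is a proof readable without opening~[CL], at the cost of redoing an argument the authors preferred to outsource.
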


\begin{proof}
Statement~(\ref{item:composition-**}) is a direct consequence of the
definition of properties
$(*)$. Statement~(\ref{item:composition-iwg}) has been proved under
slightly more general hypotheses as Proposition~3.10 in~\cite{CL}.
\end{proof}

The Properties~(\ref{*:fix-vertex}) and (\ref{*:fix-gate}) above imply
that a map which satisfies $(*)$ acts as identity on the set of gate
turns.  As a consequence one derives easily that
Statement~(\ref{item:composition-iwg}) of
Lemma~\ref{lem:property-star} can actually improved to $Wh_{\mathbf
  G}^{v_k}(f_1 \circ f_2) = Wh_{\mathbf G}^{v_k}(f_1) \cup Wh_{\mathbf
  G}^{v_k}(f_2)$.

\smallskip

We define below several graph maps on $\Gamma$ where we use the following:

\begin{convention} 
\label{convention:no-untouched-edges}
In this and the following sections, in the definition of a graph map
$\Gamma \to \Gamma$ we always use the convention that any edge with no
explicitly defined image is mapped identically to itself.
\end{convention}

For any edge $e\neq a_1 \in E^+(\Gamma)$ let $u$ and $u'$ be as in
Lemma~\ref{lem:hamiltonian-circuit}~(\ref{item:ueuprime}). Define
$h_e: \Gamma \to \Gamma$ by:
\[
h_e:\begin{array}[t]{rcl}a_1&\mapsto&ueu'a_1\\
e&\mapsto&eu'a_1ue
\end{array}
\]
Note that the $h_e$-image of $e$ passes through $a_1$ and that the
$h_e$ image of $a_1$ passes through $e$.

For any gate turn $t=({\mathfrak g},{\mathfrak g}')$ of $\Gamma$,
except for gate turns involving $\{\bar{a}_1\}$ in the maximal odd
case, let $v$ be as in Lemma~\ref{lem:hamiltonian-circuit}
(\ref{item:v}) and define $h_t:\Gamma\to\Gamma$ by:
\[
h_t: a_1\mapsto va_1
\]

Let $h'$ be the composition of all these maps $h_e$ (with $e\neq a_1$)
and $h_t$, where we do not care about the order of the composition.
Define $h:\Gamma\to\Gamma$ through $h=h'\circ h'$.

\begin{prop}\label{prop:h-positive} 
The map $h: \Gamma \to \Gamma$ is a train track morphism with respect
to $\mathbf G$.  Furthermore $h$ fixes every vertex $v_k$ of $\Gamma$,
maps each gate of the gate structure $\mathbf G$ to itself, and is a
homotopy equivalence.

In addition, the transition matrix $M(h)$ is positive, and the
gate-Whitehead-graph $Wh_{\mathbf G}^v(h)$ of $h$ at any vertex $v$ of
$\Gamma$ is connected.
\end{prop}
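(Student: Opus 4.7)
The plan is to split the proof into three phases, using the preceding lemmas as building blocks. First, I would verify directly that each elementary map $h_e$ (for $e\neq a_1$) and each $h_t$ satisfies the five properties of $(*)$. Second, I would apply Lemma~\ref{lem:property-star}(a) to propagate $(*)$ through the composition to $h'$ and then to $h=h'\circ h'$; this immediately secures that $h$ is a homotopy equivalence, a train track morphism with respect to $\mathbf{G}$, and fixes every vertex and every gate. Third, I would exploit the explicit form of the image paths to derive positivity of $M(h)$ and connectivity of $Wh_{\mathbf{G}}^{v}(h)$ at every vertex.

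In the first phase, properties~(3) and~(5) are immediate from the image formulas. Property~(4) holds because $Dh_e(a_1)$ is the first edge of $u$, which lies in $\mathfrak{g}_1$ by Lemma~\ref{lem:hamiltonian-circuit}(\ref{item:ueuprime}), while $Dh_e(e)=e$, and symmetrically $Dh_t(a_1)=D(v)\in\mathfrak{g}_1$ by Lemma~\ref{lem:hamiltonian-circuit}(\ref{item:v}); every other edge is fixed by Convention~\ref{convention:no-untouched-edges}. For property~(2) one inspects the internal junctions of $h_e(a_1)=ueu'a_1$ and $h_e(e)=eu'a_1ue$: the turns inside $ueu'$ are legal by Lemma~\ref{lem:hamiltonian-circuit}(\ref{item:ueuprime}), the turn between $u'$ and $a_1$ is legal since $u'$ does not end in $\mathfrak{g}_1$ while $a_1\in\mathfrak{g}_1$, and the turn between $a_1$ and $u$ is legal since $\bar{a}_1\notin\mathfrak{g}_1$ while $Du\in\mathfrak{g}_1$; the junction inside $h_t(a_1)=va_1$ is treated analogously. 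Together with~(4), this yields the train track property on arbitrary legal paths. Property~(1) is clear for $h_t$, which is a Nielsen automorphism of $\FN$; for $h_e$, taking the pair of $\pi_1$-generators $A_1=a_1$ and $E=ueu'$ (with all other generators fixed), the induced action is $A_1\mapsto EA_1$, $E\mapsto EA_1E$, whose inverse $E\mapsto A_1^{-1}E$, $A_1\mapsto E^{-1}A_1^{2}$ one verifies by direct substitution.

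Once $(*)$ is secured for the constituents, Lemma~\ref{lem:property-star}(a) gives $(*)$ for $h=h'\circ h'$, proving the first four clauses of the proposition. For positivity of $M(h)=M(h')^{2}$ the central observation is that $a_1$ acts as a universal pivot: every image $h'(x)$ crosses over $a_1$, and $h'(a_1)$ crosses over every edge. The first statement holds because $h'(a_1)$ ends with $a_1$, while for $x\neq a_1$ the factor $h_x$ inserts an $a_1$ into the image of $x$, and no subsequent factor removes it (the only modifications are insertions). The second statement holds because each factor $h_{e'}$ with $e'\neq a_1$ inserts the subword $u_{e'}\,e'\,u'_{e'}$ into the running image of $a_1$, so at the end $h'(a_1)$ crosses over every edge of $\Gamma$. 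Hence $(M(h')^{2})_{i,j}\geq M(h')_{i,a_1}\,M(h')_{a_1,j}>0$ for all $i,j$.

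For connectivity of $Wh_{\mathbf{G}}^{v}(h)$ I invoke the union form of Lemma~\ref{lem:property-star}(b) recorded after its proof: $Wh_{\mathbf{G}}^{v}(h)$ equals the union of the gate-Whitehead-graphs of the individual factors of $h$. For any non-excluded gate turn $t$ at $v$, the loop $v$ from Lemma~\ref{lem:hamiltonian-circuit}(\ref{item:v}) crosses $t$, hence so does $h_t(a_1)=va_1$, placing $t$ in $Wh_{\mathbf{G}}^{v}(h_t)$. At any vertex $v_k$ with $k\neq 1$, and in the even and odd cases also at $v_1$, no turn is excluded, so $Wh_{\mathbf{G}}^{v}(h)$ is the complete graph on the gates at $v$ and is connected. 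In the maximal odd case at $v_1$, the only turns missed this way are those involving the singleton gate $\{\bar{a}_1\}$; but the central $a_1\cdot u$ junction of $h_e(e)=eu'a_1ue$ (for any $e\neq a_1$) provides the gate turn $(\{\bar{a}_1\},\mathfrak{g}_1)$ since $Du\in\mathfrak{g}_1$, and this single edge attaches $\{\bar{a}_1\}$ to the rest of the already-complete subgraph. Bridging the singleton gate $\{\bar{a}_1\}$ in the maximal odd case is the only non-automatic step and is the main conceptual hurdle, since it falls outside the reach of the $h_t$ machinery and must be supplied implicitly by the $h_e$'s.
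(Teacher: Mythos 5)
Your proof is correct and follows essentially the same route as the paper: verify Property~$(*)$ for each $h_e$ and $h_t$ (the homotopy inverse you compute at the level of $\pi_1$-generators is the same map the paper writes down as a graph map), propagate $(*)$ through the composition via Lemma~\ref{lem:property-star}, use $a_1$ as the pivot edge to get positivity of $M(h)$, and finally bridge the singleton gate $\{\bar a_1\}$ in the maximal odd case by a crossing inside $h_e(e)$. The only minor variation is that the paper bridges via $h_{c_1}(c_1)$ specifically, whereas you correctly observe that the $a_1\cdot u$ junction of $h_e(e)$ works for any $e\neq a_1$.
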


\begin{proof}
We first consider the maps $h_e$ with $e \neq a_1$ and $h_t$ as
defined above.  Properties~(\ref{*:train-track})-(\ref{*:e-image}) of
$(*)$ above are easily verified directly. For
Property~(\ref{*:homot-eq}) the reader can check directly that the map
given by
\[
\begin{array}[t]{rcl}
a_1&\mapsto&\bar u' \bar e \, \bar u a_1^2\\
e&\mapsto& \bar u \, \bar a_1 u e
\end{array}
\]
is a homotopy inverse of $h_e$.  The fact that it is not a train track
map with respect to $\mathbf G$ is irrelevant. For $h_t$ a homotopy
inverse is given simply by $a_1 \mapsto \bar v a_1$.  From
Lemma~\ref{lem:property-star} it follows now that the maps $h'$ and
$h$ have this Property~$(*)$, which is the statement of the first
paragraph to be shown.

In order to show that $M(h)$ is positive, we use the equality $M(fg) =
M(f) M(g)$ from Section~\ref{sec:prelim-CLP} and condition
(\ref{*:e-image}) of Property~$(*)$ to obtain that the image $h'(e)$
of any edge $e$ crosses over $a_1$, and that the image $h'(a_1)$ of
$a_1$ crosses over every edge $e\in E^+(\Gamma)$.  Hence the image
$h(e)$ of any edge $e$ passes through all edges of $\Gamma$. This
proves that the transition matrix $M(h)$ is positive.

From Lemma~\ref{lem:property-star} we know that the
gate-Whitehead-graph of $h$ contains that of $h_t$, for each gate turn
$t=({\mathfrak g},{\mathfrak g}')$.  It follows from the above
definition of $h_t$ via $a_1 \mapsto v a_1$ and from the definition of
$v$ in Lemma~\ref{lem:hamiltonian-circuit}~(\ref{item:v}) that in the
even and the odd cases the gate-Whitehead-graph of $h$ at each vertex
of $\Gamma$ is a complete graph and thus connected.  In the maximal
odd case there are no maps $h_t$ for the gate turns involving the gate
$\{\bar{a}_1\}$.  But in this case the gate turn
$(\{\bar{a}_1\},\{a_1, c_1\})$ is crossed over by $h_{c_1}(c_1)$.
This is enough to get that the gate-Whitehead-graph of $h$ at $v_1$ is
connected.
\end{proof}

\section{Building the legalizing map}
\label{sec:legalizing-map}

The goal of this (and the following) section is to construct a train
track morphism $g: \Gamma \to \Gamma$ with respect to $\mathbf G$
which is a homotopy equivalence and is ``legalizing''.  This notion
has been introduced in~\cite{CL}, and is now briefly summarized:

A pair $(\gamma, \gamma')$ of non-trivial legal paths $\gamma$ and
$\gamma'$ in $\Gamma$ is called in~\cite{CL} a {\em long turn} if the
{\em branches} $\gamma$ and $\gamma'$ start at the same vertex but
have distinct initial edges.  The set of long turns in $\Gamma$, with
both branches of length equal to some integer $C \geq 1$, is denoted
by $LT_C(\Gamma)$.

The long turn $(\gamma, \gamma')$ can be {\em legal} or {\em illegal},
according to whether its {\em starting turn} $s(\gamma, \gamma')$,
formed by the initial edges of $\gamma$ and $\gamma'$ respectively, is
legal or illegal (as defined for turns in the traditional sense, see
\S\ref{sec:prelim-CLP}).

If neither $g(\gamma)$ is a subpath of $g(\gamma')$ nor conversely,
then the long turn is called {\em $g$-long}, and the long turn,
obtained from $g(\gamma)$ and $g(\gamma')$ through erasing from both
the maximal common initial subpath, is called the {\em $g$-image} of
$(\gamma, \gamma')$ and denoted by $g^{LT}(\gamma, \gamma')$.  A train
track morphism $g$ is called {\em legalizing} if for some sufficiently
large constant $C \geq 1$ every long turn $(\gamma, \gamma') \in
LT_C(\Gamma)$ is $g$-long, and if $g^{LT}(\gamma, \gamma')$ is legal
(or, equivalenty, if the starting turn of $g^{LT}(\gamma, \gamma')$ is
legal).

To avoid a misunderstanding, we point out that any
non-degenerate turn $(e, e')$ in the classical sense can be considered
alternatively as long turn with branches of length 1.  In particular,
if $(e, e')$ is $g$-long, then it has both, an image turn $(Dg(e),
Dg(e'))$, as well as an image long turn $g^{LT}(e, e')$, which
furthermore has a starting turn $sg^{LT}(e, e')$ that is again a turn
in the classical sense.  However, in general $(Dg(e), Dg(e'))$ and
$sg^{LT}(e, e')$ will be quite different: for example, $(Dg(e),
Dg(e'))$ may well be degenerate, while $sg^{LT}(e, e')$ is by
definition always non-degenerate.

\smallskip

To construct the desired train track morphism $g$ we define now train
track morphisms $g_t$. In each of the cases considered below the
``variable'' $t$ denotes a non-degenerate illegal turn in $\Gamma$,
interpreted here as long turn with branch length 1.  The reader can
verify directly that all of the maps $g_t$ defined below satisfy the
statements~(\ref{*:train-track}), (\ref{*:fix-vertex}) and
(\ref{*:fix-gate}) of Property~$(*)$ from
Section~\ref{sec:map-iwip-but-inp}.  We use again
Convention~\ref{convention:no-untouched-edges}.

We first deal with the \underline{even and odd cases}:
\begin{enumerate}
\item Let $t=(a_i,e)$ be an illegal turn in $\Gamma$ with $1\leq i\leq
  s$, and with $e=c_1$ or $e=a_j$ where $i\neq j$.  Let $\alpha$ and
  $\beta$ be as in Lemma~\ref{lem:hamiltonian-circuit}. Define:
  \[
  g_t: \begin{array}[t]{rcl}a_i&\mapsto& a_i e\alpha\\ e&\mapsto&a_i
    \beta a_i e
    \end{array}
  \]
  The illegal turn $t=(a_i,e)$ is $g_t$-long and mapped by $g_t^{LT}$
  to the long turn $(e\alpha,\beta a_ie)$ which is legal.
\item Symmetrically, let $t=(\bar{a}_i,\bar{e})$ be an illegal turn in
  $\Gamma$ with $1\leq i\leq s$, and with $e=c_\ell$ or $e=a_j$ where
  $i\neq j$.  Let $\alpha'$ and $\beta'$ be as in
  Lemma~\ref{lem:hamiltonian-circuit}. Define:
  \[
  g_t: \begin{array}[t]{rcl}a_i&\mapsto& \alpha' ea_i\\ 
                             e&\mapsto&ea_i \beta' a_i
    \end{array}
  \]
  The illegal turn $t=(\bar{a}_i,\bar{e})$ is $g_t$-long and mapped by
  $g_t^{LT}$ to the long turn
  $(\bar{e}\,\bar{\alpha}',\bar{\beta}'\bar{a}_i\bar{e})$ which is
  legal.
\item In the \underline{odd case} we have one more illegal turn
  $t=(d,\bar{d})$. Define:
  \[
  g_t:\begin{array}[t]{rcl}
    a_1&\mapsto&a_1\bar d c_1\cdots c_\ell\\
    d&\mapsto&da_1\bar d
  \end{array}
  \]
The illegal turn $t=(d,\bar{d})$ is $g_t$-long and mapped by
$g_t^{LT}$ to the long turn $(a_1 \bar d,\bar{a}_1 \bar d)$ which is
legal.
\end{enumerate}

In the \underline{maximal odd case} there is only one illegal turn
$t=(a_1,c_1)$. As the rank $N$ is greater or equal to $3$, there is at
least one edge $b_1$ which starts from some vertex $v_{k}$ and ends at
some $v_{k'}$. We set $c_{[1,k]}=c_1\cdots c_\ell$ if $k=1$ and
$c_{[1,k]}=c_1\cdots c_{k-1}$ if $2 \leq k\leq\ell$.  We furthermore
set $c_{[k',\ell]}=1$ if $k'=1$, and $c_{[k',\ell]}=c_{k'}\cdots
c_\ell$ if $2 \leq k'\leq\ell$.

We define:
\[
  g_t: \begin{array}[t]{rcl} 
  a_1&\mapsto&c_1\cdots c_\ell c_{[1,k]}b_1c_{[k',\ell]}a_1\\ 
  c_1&\mapsto&c_1\cdots c_\ell c_{[1,k]}b_1c_{[k',\ell]}a_1c_1\\ 
  b_1&\mapsto&b_1c_{[k',\ell]}a_1c_{[1,k]}b_1
  \end{array}
\]

\begin{lem}
\label{lem:easy-check}
In the maximal odd case, every long turn of length equal to $\ell+1$
with starting turn $(a_1, c_1)$ is $g_t$-long and mapped by $g_t^{LT}$
to a legal long turn.
\end{lem}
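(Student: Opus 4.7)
The plan is to compute the maximal common prefix of $g_t(\gamma)$ and $g_t(\gamma')$ for a generic long turn $(\gamma,\gamma')$ of length $\ell+1$ starting with $(a_1,c_1)$. Write $\gamma = a_1 e_2\cdots e_{\ell+1}$ and $\gamma' = c_1 e_2'\cdots e_{\ell+1}'$. Using the defining relation $g_t(c_1)=g_t(a_1)\cdot c_1$, I would first factor
\[
g_t(\gamma)=g_t(a_1)\cdot g_t(e_2\cdots e_{\ell+1}), \qquad g_t(\gamma')=g_t(a_1)\cdot c_1\cdot g_t(e_2'\cdots e_{\ell+1}'),
\]
so the maximal common prefix decomposes as $g_t(a_1)\cdot\pi$, where $\pi$ is the common prefix of $g_t(e_2\cdots)$ and $c_1\cdot g_t(e_2'\cdots)$. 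The analysis then splits on whether $e_2\in\mathfrak g_1=\{a_1,c_1\}$.

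If $e_2\notin\mathfrak g_1$, the argument is immediate: $g_t(e_2)=e_2$ begins with an edge different from $c_1$, so $\pi$ is trivial, the long turn is $g_t$-long, and the starting turn of $g_t^{LT}(\gamma,\gamma')$ is $(e_2,c_1)$ at $v_1$. It is legal because $c_1\in\mathfrak g_1$ while $e_2$ lies in a singleton gate.

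The interesting case is $e_2\in\mathfrak g_1$: then $g_t(e_2)$ itself begins with the prefix $c_1c_2\cdots c_\ell$ of $g_t(a_1)$, matching the second branch's initial $c_1$ followed by the cyclic pattern $c_2c_3\cdots$. I would extend $\pi$ edge by edge, observing that continued matching forces $e_j'=c_j$ for $j=2,\ldots,\ell$; these matches are automatically legal since the vertices $v_2,\ldots,v_\ell$ carry only singleton gates. After those $\ell-1$ matches only $e_{\ell+1}'$ remains, and further matching would require $e_{\ell+1}'\in\{a_1,c_1\}$ (so that $g_t(e_{\ell+1}')$ starts with the initial $c_1$ of the $c_{[1,k]}$-piece of $g_t(a_1)$). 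A direct subcase analysis in $k=1$ versus $k\geq 2$ then shows that the $b_1$-edge sitting inside $g_t(a_1)$ forces divergence at $v_k$ (or at $v_1$ when $k=1$), producing a turn of the form $(b_1,c_k)$ or $(b_1,c_1)$; since $b_1$ lies in a singleton gate, this turn is legal. If matching instead breaks earlier -- at some $v_j$ with $e_{j+1}'\neq c_{j+1}$, or at $v_1$ with $e_{\ell+1}'\notin\{a_1,c_1,\bar c_\ell\}$ -- the divergence turn still has one edge in a singleton gate and is automatically legal.

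The principal obstacle I expect is the bookkeeping in this second case: several subcases (for $k=1$ vs $k\geq 2$, $e_2=a_1$ vs $e_2=c_1$, and the pattern of the $e_i'$) must be tracked, and in each one it must be verified that the common prefix stops strictly within both images so that the long turn is indeed $g_t$-long. What keeps the analysis finite is the length constraint $|\gamma'|=\ell+1$: it is just enough for $\gamma'$ to mirror the initial $c_1c_2\cdots c_\ell$ loop of $g_t(a_1)$ but not enough to traverse past the interior $b_1$-edge, so divergence at a legal turn is guaranteed.
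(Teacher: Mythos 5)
Your proposal follows essentially the same strategy as the paper: factor out the common prefix $g_t(a_1)$ using $g_t(c_1) = g_t(a_1)c_1$, then track the cancellation edge by edge, using the structure of $g_t(a_1)$ --- in particular the interior $b_1$-edge and the fact that $v_2,\ldots,v_\ell$ carry only singleton gates --- to force divergence at a legal turn before the branch of length $\ell+1$ runs out. Two differences are worth noting. First, your split on $e_2 \in \mathfrak g_1 = \{a_1, c_1\}$ versus $e_2 \notin \mathfrak g_1$ is finer than the paper's split on $e_2 = c_1$ versus $e_2 \neq c_1$, and is in fact more careful: the paper asserts that for $e_2 \neq c_1$ the starting turn of $g_t^{LT}(t^*)$ is $(e_2, c_1)$, but this fails when $e_2 = a_1$ (since $Dg_t(a_1) = c_1$ the common prefix continues past $g_t(a_1)c_1$, and the turn $(a_1,c_1)$ is illegal anyway, contradicting ``hence is legal''); your split correctly routes $e_2 = a_1$ into the deep case together with $e_2 = c_1$. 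Second, you leave the deep-case computation --- the explicit $g_t^{LT}(t^*)$ in the worst case $e_2'\cdots e_{\ell+1}' = c_2\cdots c_\ell c_1$, with subcases $k=1$ and $k\geq 2$, together with the check that the common prefix stops strictly inside both images so that the long turn is $g_t$-long --- as acknowledged bookkeeping, whereas the paper records the resulting long turns explicitly. One small slip: for $e_2 \notin \mathfrak g_1$ you write $g_t(e_2)=e_2$, but for $e_2 \in \{b_1,\bar b_1,\bar a_1\}$ (or $\bar c_\ell$ when $\ell=1$) the image path is longer; what is true, and all that you need, is $Dg_t(e_2)=e_2$.
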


\begin{proof}
Let $t^* = (a_1 e_2 \ldots e_{\ell +1}, c_1 e'_2 \ldots e'_{\ell +1})$
be the long turn under consideration. We first observe that, if $e_2
\neq c_1$
and $e_2 \neq a_1$\footnote{\,\, We'd like to thank C. Pfaff for having pointed out to us that the treatment of this case was missing in an earlier version of our paper.}, 
then $g_t^{LT}(t^*)$ has starting turn $(e_2, c_1)$ and
hence is legal. 
In order to treat computationally the possible ``exceptional'' cases without too many subcases we introduce a variable $x$ which we set to $x = a_1 c_1$ if $e_{2} = c_1$ and $x = a_1$ if $e_{2} = a_1$. 
Similarly, a second variable $y$ will be used below which is set to $y = a_1 c_1$ if $e'_{\ell +1} = c_1$ and $y = a_1$ if $e'_{\ell + 1} = a_1$.

We observe
that 
in each case 
$g_t^{LT}(t^*)$ is legal unless 
$e'_2 \ldots e'_{\ell +1} = c_2 \ldots c_\ell c_1$
or
$e'_2 \ldots e'_{\ell +1} = c_2 \ldots c_\ell a_1$.
We compute 
\[
g_t^{LT}(t^*) = (b_1c_{[k',\ell]}
x, 
c_k\cdots c_\ell c_{[1,k]}b_1c_{[k',\ell]}
y) \,\,\, {\rm if} \,\,\, 
\ell \geq k \geq 2 ,
\] 
and 
\[
g_t^{LT}(t^*) = (b_1c_{[k',\ell]}
x, 
c_1 \ldots c_\ell 
b_1c_{[k',\ell]}
y) \,\,\, {\rm if} \,\,\, 
k = 1 \,\,\, {\rm and} \,\,\,  \ell \geq 2.
\] 
Finally we have
\[
g_t^{LT}(t^*) = (b_1c_{[k',\ell]}
x, 
c_1 
b_1c_{[k',\ell]}
y) 
\,\,\, {\rm if} \,\,\, 
k = \ell = 1,
\] 
All three of those computed long turns are legal.
\end{proof}

We now verify:

\begin{lem}
\label{lem:homotopy-inverses}
Each of the above defined maps $g_t$ is a homotopy equivalence.
\end{lem}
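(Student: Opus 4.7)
The plan is to use the Hopf property of the free group $\FN$. Each $g_t$ fixes every vertex of $\Gamma$, so it induces an endomorphism $(g_t)_*$ of $\pi_1(\Gamma,v_1)\cong \FN$; since $\Gamma$ is a $K(\FN,1)$-space, the map $g_t$ is a homotopy equivalence if and only if $(g_t)_*$ is an isomorphism, equivalently (by the Hopf property) if and only if $(g_t)_*$ is surjective. In each of the four cases above only two or three (unoriented) edges of $\Gamma$ are moved by $g_t$; the remaining edges, being fixed, trivially lie in the image of $(g_t)_*$, so it suffices to exhibit the few non-fixed edges as words in that image.

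For Cases~(1) and~(2) only $a_i$ and $e$ (resp.\ their inverses) are altered, and the key observation is that the auxiliary paths $\alpha,\beta$ (resp.\ $\alpha',\beta'$) from Lemma~\ref{lem:hamiltonian-circuit} consist entirely of edges fixed by $g_t$, so $\alpha$ and $\beta$ already lie in the image. Then from $g_t(a_i)=a_ie\alpha$ and $g_t(e)=a_i\beta\cdot a_ie=a_i\beta\cdot g_t(a_i)\alpha^{-1}$ I would read off
\[
a_i \;=\; g_t(e)\,\alpha\, g_t(a_i)^{-1}\beta^{-1},\qquad e \;=\; a_i^{-1}g_t(a_i)\alpha^{-1},
\]
both in the image; Case~(2) follows symmetrically. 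For Case~(3) the argument is equally direct: from $g_t(d)=da_1d^{-1}$ I would extract $a_1=d^{-1}g_t(d)d$, substitute into $g_t(a_1)=a_1d^{-1}c$ with $c=c_1\cdots c_\ell$ fixed, obtaining $g_t(a_1)=d^{-1}g_t(d)c$, and thus $d=g_t(d)\,c\,g_t(a_1)^{-1}$ in the image, whence $a_1$ too.

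The main obstacle is Case~(4), the maximal odd case, where three edges $a_1$, $c_1$, $b_1$ are simultaneously altered and whose images are intertwined. I would untangle them step by step. First, the factorization $g_t(c_1)=g_t(a_1)\cdot c_1$ immediately gives $c_1=g_t(a_1)^{-1}g_t(c_1)$ in the image, and hence so are $c=c_1\cdots c_\ell$, $P:=c_{[1,k]}$, and $Q:=c_{[k',\ell]}$. Writing $g_t(a_1)=cP\cdot(b_1Qa_1)$, the element $W:=b_1Qa_1=P^{-1}c^{-1}g_t(a_1)$ lies in the image; next, the factorization $g_t(b_1)=(b_1Qa_1)Pb_1=WPb_1$ yields $b_1=P^{-1}W^{-1}g_t(b_1)$ in the image; and finally $a_1=Q^{-1}b_1^{-1}W$ follows. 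In every case $(g_t)_*$ is thus surjective, hence an isomorphism of $\FN$ by the Hopf property, and $g_t$ is a homotopy equivalence.
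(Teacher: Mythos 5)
Your argument is correct, and it takes a genuinely different route from the paper's proof. Where you reduce the problem to surjectivity of the induced endomorphism $(g_t)_*$ of $\pi_1(\Gamma,v_1)\cong\FN$ and invoke the Hopf property of finitely generated free groups (together with Whitehead's theorem for the aspherical complex $\Gamma$), the paper instead writes down an explicit graph map $g'_t$ for each $g_t$ and leaves to the reader the verification that it is a homotopy inverse. Your approach is conceptually lighter: it replaces the need to guess and check an explicit two-sided inverse with a short triangular elimination showing each moved edge lies in the image, relying on the structural fact that the auxiliary paths $\alpha,\beta,\alpha',\beta'$ (resp. the $c_k$ with $k\geq 2$) consist entirely of $g_t$-fixed edges. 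The paper's approach buys you the explicit inverse maps themselves, which can be useful data, but at the cost of more opaque bookkeeping; yours is more transparent and self-contained, since it makes the ``why it works'' visible. One detail that is implicitly used and worth stating is that the Hopf property applies because $\FN$ is finitely generated; you also correctly exploit the factorization $g_t(c_1)=g_t(a_1)\,c_1$ in the maximal odd case, which is the key to untangling the three moved edges there. I checked the eliminations in all four cases and they are correct.
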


\begin{proof}
For each case of the map $g_t$ we list below a map $g'_t$; the reader
can verify directly that they are homotopy inverses of the maps $g_t$.

\smallskip

Even and odd cases:

\noindent
(1)
 \[
 g'_t: \begin{array}[t]{rcl}
 a_i&\mapsto&   e \alpha \bar a_i \bar \beta \\ 
 e&\mapsto& \beta a_i \bar \alpha \, \bar e a_i \bar \alpha
    \end{array}
  \]

\noindent
(2)
 \[
 g'_t: \begin{array}[t]{rcl}
 a_i&\mapsto&   \bar \beta' \bar a_i \alpha' e \\ 
 e&\mapsto& \bar \alpha' a_i \bar e \, \bar \alpha' a_i \beta'
    \end{array}
  \]
 
 \noindent
(3)
 \[
 g'_t: \begin{array}[t]{rcl}
 a_1&\mapsto&  a_1 \bar c_\ell \ldots \bar c_1 d c_1 \ldots c_\ell \bar a_1 \\ 
 d&\mapsto& d c_1 \ldots c_\ell \bar a_1
    \end{array}
  \]

Maximal odd case:
\[
  g'_t: \begin{array}[t]{rcl} 
a_1&\mapsto& \bar c_{[k', \ell]} \bar b_1 \bar c_{[1, k]} a_1 \bar c_\ell \ldots \bar c_1 a_1^2 \bar c_\ell \ldots \bar c_1 a_1^2 \\ 
c_1&\mapsto&  \bar a_1 c_1 \\ 
b_1&\mapsto& \bar c_{[1, k]} \bar a_1 c_1 \ldots c_\ell \bar a_1 c_{[1, k ]} b_1
  \end{array}
\]
\end{proof}

We thus have proved:

\begin{prop}
\label{prop:local-legalizing}
  Let $L=1$ in the even and odd cases and $L=\ell+1$ in the maximal
  odd case, and let $t$ be any illegal turn of $\Gamma$. For each long
  turn $t^*$ of $\Gamma$, with branch length $L$ and with starting
  turn $t$, the map $g_t$ is a train track morphism with the property
  that $t^*$ is $g_t$-long and mapped by $g_t^{LT}$ to a legal long
  turn.
  
Furthermore, $g_t$ is a homotopy equivalence which fixes every vertex
of $\Gamma$ and every gate of $\mathbf G$.  \qed
\end{prop}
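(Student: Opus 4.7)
The plan is to assemble four ingredients, three of which have essentially been prepared already. The proposition asserts about each $g_t$: (a) it is a train track morphism with respect to $\mathbf{G}$; (b) it fixes each vertex; (c) it fixes each gate; (d) it is a homotopy equivalence; and the main point (e): each long turn $t^*$ of branch length $L$ with starting turn $t$ is $g_t$-long, with $g_t^{LT}(t^*)$ legal. Parts (a), (b), (c) were already announced as directly verifiable in the paragraph preceding Lemma~\ref{lem:easy-check}; part (d) is Lemma~\ref{lem:homotopy-inverses}; the maximal-odd instance of (e) is Lemma~\ref{lem:easy-check}.

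For (b) and (c), I would simply read off from each of the four explicit definitions of $g_t$ that, for every edge $e$ on which $g_t$ is not the identity, the image $g_t(e)$ starts and ends at the same vertices as $e$, and its first (respectively last) edge lies in the same gate as $e$ (respectively $\bar e$). Together with Convention~\ref{convention:no-untouched-edges}, this yields (b) and (c). For (a), I would walk through each case and verify that the word giving $g_t(e)$ contains no internal illegal turn. This is exactly where the auxiliary paths $u,u'$, $v$, $\alpha$, $\beta$, $\alpha'$, $\beta'$ of Lemma~\ref{lem:hamiltonian-circuit} pay off: they were built to be legal, and at every splice point — between $a_i$ and $e$, between $e$ and $\alpha$, between $\beta$ and $a_i$, etc. — the two consecutive edges lie in distinct gates, because the lemma guarantees the relevant endpoints to land in $\mathfrak{g}_1$ or its complement, and because $\alpha,\beta,\alpha',\beta'$ by construction avoid the $a_i$'s and the edge $e$. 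In the maximal-odd case the same type of bookkeeping applies to the inserts $c_{[1,k]}$, $c_{[k',\ell]}$ and the single edge $b_1$, using that at $v_1$ the only non-singleton gate is $\{c_1,a_1\}$, which is never crossed as an internal turn of the listed images.

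For (e), in the even and odd cases $L=1$ so $t^*=t$ has branch length one and $g_t^{LT}(t)$ is simply read off from the explicit definitions: in each of the three subcases the image long turn was written out immediately after the construction and observed to be legal. In the maximal odd case (e) is precisely Lemma~\ref{lem:easy-check}. The main — in fact the only — obstacle is the case-by-case check for (a); no new idea is required, but the combinatorial verification must be done carefully for each of the four templates. Apart from that, the proposition is essentially an assembly of the pieces already in place.
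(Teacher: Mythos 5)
Your proposal follows the paper's own proof exactly: the paper also treats the proposition as an assembly of the pieces already in place, namely the direct verification of statements (\ref{*:train-track})--(\ref{*:fix-gate}) announced before the definitions of the $g_t$, the explicit computation of $g_t^{LT}(t)$ written out immediately after each definition in the even and odd cases, Lemma~\ref{lem:easy-check} for the maximal odd case, and Lemma~\ref{lem:homotopy-inverses} for the homotopy equivalence. Your elaboration of the ``directly verifiable'' checks via the endpoint and gate constraints supplied by Lemma~\ref{lem:hamiltonian-circuit} is accurate and matches the intended reading.
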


\section{Proof of the main result
}
\label{proofs}

Proposition~\ref{prop:local-legalizing} is the main ingredient we need
to obtain the desired legalizing map. This is done through an
application of Proposition~7.1 of~\cite{CL} which we quote now, for
the convenience of the reader in a slightly weakened form and with
terminology adapted to the present paper:

\begin{prop}[{\cite[Proposition~7.1]{CL}}]
\label{prop:quote-1}
Let $\Gamma$ be a graph with a gate structure $\mathbf G$. Assume that
there exists a constant $L\geq 1$, and assume furthermore:
\begin{enumerate}
\item\label{item:hyp-legalizing} For each illegal long turn $t$ with branch length $L$ there
  exists a train track morphism $g_t:\Gamma\to\Gamma$ such that $t$ is
  $g_t$-long and mapped by $g_t^{LT}$ to a legal long turn.
\item\label{item:hyp-positive} There exists a train track morphism $h:\Gamma\to\Gamma$ which
  satisfies $|h(e)| \geq 2$ for any edge $e$ of $\Gamma$.
\item\label{item:hyp-hom-eq} All the above maps $g_t$ and $h$ are homotopy equivalences.
\end{enumerate}
Then there exists a legalizing
train track morphism $g:\Gamma \to \Gamma$ which is obtained as a composition of the
$g_t$ and $h$.
\qed
\end{prop}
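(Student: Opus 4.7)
The strategy is to obtain $g$ by composing the morphisms $g_t$ from hypothesis~(\ref{item:hyp-legalizing}) with iterates of $h$ from hypothesis~(\ref{item:hyp-positive}). The central tool I would establish first is a \emph{preservation principle}: if $f$ is a train track morphism and $s^*$ is a long turn with $f^{LT}(s^*)$ a legal long turn, then for any further train track morphism $f'$ the long turn $s^*$ is also $(f'\circ f)$-long and $(f'\circ f)^{LT}(s^*)$ is legal. The proof is a short calculation. Since $f'$ maps legal paths to legal paths, the image under $f'$ of the common prefix of $f(\gamma), f(\gamma')$ is still a common prefix of $(f'\circ f)(\gamma), (f'\circ f)(\gamma')$; and since $f'$ induces a bijection on gates at each periodic vertex, the legal starting turn of $f^{LT}(s^*)$ maps to a legal, hence non-degenerate, starting turn, so no further cancellation occurs.

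A useful reduction shows it suffices to build a train track morphism $G$ legalizing every long turn $s^*$ of branch length \emph{exactly} $L$. Indeed, for $(\gamma,\gamma')\in LT_C(\Gamma)$ with $C\geq L$, let $(\tilde\gamma,\tilde\gamma')$ denote its length-$L$ initial prefix. Since $\gamma,\gamma'$ are legal and $G$ is a train track morphism, no cancellation occurs past the common prefix, forcing the common prefix of $G(\gamma), G(\gamma')$ to coincide with that of $G(\tilde\gamma), G(\tilde\gamma')$. Hence $G^{LT}(\gamma,\gamma')$ inherits the legal starting turn of $G^{LT}(\tilde\gamma,\tilde\gamma')$, and one may take $C=L$.

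To construct $G$, I would enumerate the finitely many illegal long turns of length $L$ as $t_1,\ldots,t_m$ and form
\[
G \;=\; g_{t_m}\circ h^{n_m}\circ g_{t_{m-1}}\circ\cdots\circ g_{t_1}\circ h^{n_1},
\]
with integers $n_i$ chosen sufficiently large. Legal long turns of length $L$ are handled automatically: each factor sends a legal long turn to a legal long turn via the gate bijection at periodic vertices, and the preservation principle carries this through the whole composition. For each illegal $t_i$, the buffering by $h^{n_i}$ (using $|h(e)|\geq 2$) expands branches enough that when $g_{t_i}$ acts the relevant starting portion is recognizable and $t_i$ gets legalized at step $i$; thereafter the preservation principle protects the legality through the remaining factors.

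The main obstacle is precisely this interaction control. A naive composition $g_{t_m}\circ\cdots\circ g_{t_1}$, without buffering iterates of $h$, can fail because applying $g_{t_1}$ to $t_2$ may transform it into a long turn whose starting turn is not one of those $g_{t_2}$ was built to handle; the role of the $h^{n_i}$ factors is to restore enough structure at each stage so that the correct legalizer can be applied. The remaining verifications that $G$ is a train track morphism and a homotopy equivalence follow from hypothesis~(\ref{item:hyp-hom-eq}) and the closure of these properties under composition.
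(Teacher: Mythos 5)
The preservation principle you state is correct, and it is indeed the right key lemma: once a long turn is mapped to a legal long turn, any further train track morphism keeps it legal (because the legal starting turn of the image is sent to a non-degenerate legal turn via the gate bijection at the periodic vertex, so no additional cancellation occurs). The reduction to branch length exactly $L$ is also fine for the same reason.

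The gap is in the construction of $G$. You fix an enumeration $t_1,\ldots,t_m$ of the illegal long turns of branch length $L$ and form $G = g_{t_m}\circ h^{n_m}\circ\cdots\circ g_{t_1}\circ h^{n_1}$, claiming that each $t_i$ ``gets legalized at step $i$.'' But at the moment $g_{t_i}$ acts in this composition, it is applied to $P_i^{LT}(t_i)$, where $P_i = h^{n_i}\circ g_{t_{i-1}}\circ\cdots\circ g_{t_1}\circ h^{n_1}$ is the partial composition already performed. This is a different long turn from $t_i$: the earlier factors have transformed $t_i$, and there is no reason its length-$L$ truncation should again equal $t_i$. Hypothesis~(\ref{item:hyp-legalizing}) only guarantees that $g_{t_i}$ legalizes the \emph{specific} long turn $t_i$; it says nothing about $g_{t_i}$ legalizing $P_i^{LT}(t_i)$. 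So the claim that $t_i$ is legalized at step $i$ does not follow. You acknowledge this ``interaction control'' problem, but the proposed fix --- buffering with $h^{n_i}$ --- does not address it: $h$ is a fixed expanding train track morphism, it cannot ``restore'' the original starting portion that the earlier $g$-factors already altered. At best $|h(e)|\geq 2$ re-lengthens branches so that a length-$L$ truncation is available, but that truncation is some $\tau$ which need not equal $t_i$, and then the factor one must apply is $g_\tau$, not $g_{t_i}$. The composition therefore has to be built \emph{adaptively}: at each stage one computes the current image of the long turn being processed, uses $h$ to re-lengthen, truncates to find the current illegal length-$L$ prefix $\tau$, and inserts $g_\tau$ --- not $g_{t_i}$ --- as the next factor. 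Termination then comes from finiteness of $LT_L(\Gamma)$ together with your (correct) preservation principle. As written, the fixed a priori composition does not yield the conclusion.
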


Before going back to the situation considered in the previous
sections, we first quote the main result of~\cite{CL}, in a slightly
strengthened version due to 
Remark~6.6 of~\cite{CL} and adapted to the
circumstances here:

\begin{thm}[{\cite[Theorem~1.1 and 
Remark~6.6]{CL}}]
\label{thm:quote-2}
Let $\Gamma$ be a graph with gate structure $\mathbf G$, let $f:
\Gamma \to \Gamma$ be a train track morphism with positive transition
matrix $M(f)$ and gate-Whitehead-graph at every vertex that is
connected.  Let $g: \Gamma \to \Gamma$ be a legalizing train track
morphism with respect to the gate structure $\mathbf G$ which is a
homotopy equivalence that fixes every vertex of $\Gamma$ and every
gate of $\mathbf G$.  Then:
\begin{enumerate}
\item\label{item:thm-quote-2-iwip} The map $f \circ g: \Gamma
  \to\Gamma$ is a train track representative of an iwip automorphism
  $\phi \in \Out(\FN)$.
\item\label{item:thm-quote-2-no-inp} There is no periodic INP in
  $\Gamma$ for the train track map $f \circ g$.  In particular there
  are no non-trivial $(f \circ g)$-periodic conjugacy classes in
  $\FN$.
\item\label{item:thm-quote-2-index} The stable index list for $\phi$
  is given by the gate index list for 
$\Gamma$ defined by $\mathbf G$ at the $f$-periodic vertices of $\Gamma$.
\end{enumerate}
\end{thm}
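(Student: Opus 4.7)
My plan is to establish parts~(1)--(3) of the theorem in the order (2), (1), (3): the key step is showing that $f\circ g$ has no periodic INPs, exploiting the legalizing property of~$g$ via the long turn machinery of~\cite{CL}; once~(2) is in place, part~(1) follows from a classical Bestvina--Handel-style criterion for iwip train track representatives (using positivity of $M(f\circ g)$ and connectedness of its gate-Whitehead-graphs), and part~(3) from the correspondence from~\cite{CL}~\S8 recalled in the introduction, between $f$-periodic vertices of $\Gamma$ and $\FN$-orbits of branch points of the forward limit tree~$T$ of~$\phi$.

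Before attacking~(2) I would dispatch the two routine hypothesis verifications. From $M(f\circ g) = M(f) M(g)$, the positivity of~$M(f)$, and the fact that $g$ is a train track morphism (so $|g(e)| \geq 1$ for every edge~$e$), positivity of~$M(f\circ g)$ is immediate. For connectedness of $Wh^v_{\mathbf G}(f\circ g)$ at every vertex~$v$: the map~$g$ fixes all vertices and gates, while positivity of~$M(f)$ implies the $f$-image of every edge crosses itself, so $f$ and $g$ both satisfy Property~$(*)$ of Section~\ref{sec:map-iwip-but-inp} in the relevant setting, and Lemma~\ref{lem:property-star}~(\ref{item:composition-iwg}) gives $Wh^v_{\mathbf G}(f\circ g) \supseteq Wh^v_{\mathbf G}(f)$; the connectedness hypothesis transfers accordingly.

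The main obstacle is~(2). I would argue by contradiction: suppose $\rho = \gamma\gamma'$ is a periodic INP of period~$t$ with central illegal turn $(e_1,e_2)$ at a vertex~$v_c$, and aim to force legality at the central vertex after one more application of~$f\circ g$. Positivity of $M(f\circ g)$ makes the legal iterates $(f\circ g)^n(\bar\gamma)$ and $(f\circ g)^n(\gamma')$ grow unboundedly, while the periodic INP property forces the iterated central long turn (obtained after peeling common initial subpaths from the iterated $\bar\gamma$- and $\gamma'$-images at each step) to retain an illegal starting turn in the orbit of $(e_1,e_2)$. For $n$ large enough, from the initial segments one extracts an illegal long turn of branch length equal to~$C$ in $LT_C(\Gamma)$; the legalizing property of~$g$ then forces $g^{LT}$ of this long turn to have a legal starting turn. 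Translating back, the reduced form of $g((f\circ g)^n(\gamma)) \cdot g((f\circ g)^n(\gamma'))$ has a legal central turn, hence is a fully legal path (its halves being $g$-images of legal paths, hence legal); since $f$ is a train track morphism it preserves legality, and so $(f\circ g)^{n+1}(\rho)$ reduces to a legal path. But choosing $n$ to be a multiple of~$t$, this equals $(f\circ g)(\rho)$, which lies in the $(f\circ g)$-orbit of the periodic INP $\rho$ and is therefore itself an INP with illegal central turn by definition: contradiction. The delicate point, precisely what the long turn formalism of~\cite{CL} is designed to handle, is arranging simultaneously that (i) a valid length-$C$ illegal long turn can be extracted at an appropriate iteration (degenerate junctions may occur and require careful peeling) and (ii) its $g$-legalization does propagate to legality at the central vertex of the reduced $(f\circ g)$-image path.

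Given~(2) and the routine checks, statement~(1) follows from the classical Bestvina--Handel criterion~\cite{BH}: a train track morphism with positive (hence primitive) transition matrix, connected gate-Whitehead-graphs at every vertex, and no periodic INP is a train track representative of an iwip automorphism $\phi \in \Out(\FN)$. The ``in particular'' clause of~(2) then follows because a nontrivial $\phi$-periodic conjugacy class would yield a closed periodic Nielsen path in the train track, and thence a periodic INP. For statement~(3) I invoke the bijective correspondence of~\cite{CL}~\S8 recalled in the introduction: in the absence of periodic INPs, the gates at each $f$-periodic vertex~$v$ of~$\Gamma$ correspond bijectively to the branching directions at the associated $\FN$-orbit of branch points of~$T$, so the gate index $\ind_{\mathbf G}(v)$ equals the branching index there; summing over orbits recovers $\ind_{stab}\phi$, and the multiset of branching indices is exactly the stable index list of~$\phi$.
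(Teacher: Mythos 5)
First, a structural point: the paper does not prove Theorem~\ref{thm:quote-2} at all. It is quoted verbatim from Theorem~1.1 and Remark~6.6 of the companion paper \cite{CL} and used as a black box, so there is no in-paper proof to compare your argument against. Judged on its own terms, your proposal is a sensible high-level outline, but it is not a proof: the gaps sit exactly where the content of \cite{CL} lies, and at the key moment you explicitly defer to ``the long turn formalism of \cite{CL}''.

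The main gap is in your argument for part~(2). A periodic INP $\rho=\gamma\gamma'$ has branches of \emph{fixed}, and in fact bounded, length: the relation $(f\circ g)^t(\rho)\simeq\rho$ rel endpoints together with bounded cancellation forces $|\gamma|,|\gamma'|$ to be bounded above by the critical constant, and nothing forces them to have length $\geq C$. Your scheme of iterating and ``peeling common initial subpaths'' does not repair this: for $n$ a multiple of $t$, the peeled long turn is again $(\bar\gamma,\gamma')$ with the same short branches, while before peeling the two iterated images may share their initial edge (the image of the illegal turn under $D(f\circ g)^n$ can be degenerate), so the truncations of length $C$ need not form an element of $LT_C(\Gamma)$ to which the legalizing property applies. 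Controlling how legalization of genuinely long turns interacts with the unbounded cancellation at the illegal turn of an iterated INP is precisely the technical heart of \cite{CL}, and it is missing here. A second gap concerns parts~(1) and~(3): both the iwip criterion (primitive matrix, connected Whitehead graphs, no periodic INP) and the identification of gates with branch directions of the limit tree $T$ must be run with the \emph{intrinsic} gate structure $\mathbf G(f\circ g)$, which a priori may be strictly finer than $\mathbf G$; connectedness of $Wh^v_{\mathbf G}$ and the gate counts for the coarser structure $\mathbf G$ do not transfer for free, and if $\mathbf G$ were strictly coarser the gate index would miscount the attracting directions. The paper flags exactly this issue in Remark~\ref{rem:intrinsic-disappeared} and resolves it by Lemma~5.9 of \cite{CL} ($g$ legalizing forces $\mathbf G(g)=\mathbf G$, hence $\mathbf G(f\circ g)=\mathbf G$); your proof does not address it.
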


We will now go back to the graph $\Gamma$ from the previous sections,
i.e. with gate structure $\mathbf G$ that realizes the given index
list from Theorem~\ref{thm:main} as gate indices.  We will show below
how to use the previously derived train track morphisms $h$ and $g_t$
on $\Gamma$ via the above quoted results from~\cite{CL} to finish the
proof of Theorem~\ref{thm:main}.

We first observe:

\begin{cor}
\label{cor:g-legalizing}
Let $\Gamma$ be the graph with gate structure $\mathbf G$ defined in
Section~\ref{sec:graph} for the given list of gate indices.  Then
there exists a legalizing train track morphism $g:\Gamma\to\Gamma$
which is a homotopy equivalence and fixes each vertex and each gate of
$\Gamma$.
\end{cor}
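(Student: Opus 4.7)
The plan is to apply Proposition~\ref{prop:quote-1} (the quoted Proposition~7.1 of~\cite{CL}) to $\Gamma$ equipped with its gate structure $\mathbf G$, taking $L=1$ in the even and odd cases and $L=\ell+1$ in the maximal odd case. All that remains is to verify its three hypotheses.

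For hypothesis~(\ref{item:hyp-legalizing}), every illegal long turn with branch length $L$ has an illegal starting turn $t$. In the even and odd cases $t$ is one of the pairs inside $\mathfrak g_1$, one of the pairs inside $\mathfrak g_2$, or (in the odd case) the turn $(d,\bar d)$; in the maximal odd case $t=(a_1,c_1)$ is the unique illegal turn. In each situation Proposition~\ref{prop:local-legalizing} already supplies the required train track morphism $g_t$ for which $t^*$ is $g_t$-long with legal $g_t^{LT}$-image.

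For hypothesis~(\ref{item:hyp-positive}) I would use the map $h$ from Proposition~\ref{prop:h-positive}, whose transition matrix $M(h)$ is positive. Positivity gives $m_{e',e}\geq 1$ for all $e,e'\in E^+(\Gamma)$, so
\[
|h(e)|=\sum_{e'\in E^+(\Gamma)} m_{e',e}\geq |E^+(\Gamma)|.
\]
Since $\Gamma$ as built in Section~\ref{sec:graph} always has $\ell\geq 1$ edges $c_k$ and at least one extra edge (we always have $s\geq 1$), it follows that $|E^+(\Gamma)|\geq 2$ and hence $|h(e)|\geq 2$. This small numerical verification, although elementary, is the one point at which the structural details of Section~\ref{sec:graph} enter in a non-formal way, and is the only step I would flag as requiring attention. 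Hypothesis~(\ref{item:hyp-hom-eq}) is then immediate: $h$ is a homotopy equivalence by Proposition~\ref{prop:h-positive}, and each $g_t$ is one by Lemma~\ref{lem:homotopy-inverses}.

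Invoking Proposition~\ref{prop:quote-1}, I obtain a legalizing train track morphism $g:\Gamma\to\Gamma$ realized as a composition of the maps $g_t$ and $h$. A composition of homotopy equivalences is a homotopy equivalence, so $g$ is one. Each factor fixes every vertex and every gate, that is, satisfies properties~(\ref{*:fix-vertex}) and~(\ref{*:fix-gate}) of $(*)$ (noted for $h$ in Proposition~\ref{prop:h-positive} and for each $g_t$ in Proposition~\ref{prop:local-legalizing}), and these two properties pass to any composition by Lemma~\ref{lem:property-star}~(\ref{item:composition-**}). Thus $g$ inherits both, completing the proof of Corollary~\ref{cor:g-legalizing}.
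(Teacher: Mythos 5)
Your proposal is correct and follows essentially the same route as the paper: both apply Proposition~\ref{prop:quote-1}, with hypothesis~(\ref{item:hyp-legalizing}) supplied by Proposition~\ref{prop:local-legalizing}, hypothesis~(\ref{item:hyp-positive}) by the map $h$ from Proposition~\ref{prop:h-positive}, hypothesis~(\ref{item:hyp-hom-eq}) by Propositions~\ref{prop:h-positive} and~\ref{prop:local-legalizing}, and with the closing observation that $g$ inherits the vertex- and gate-fixing properties from its factors. Your explicit verification of $|h(e)|\geq 2$ via positivity of $M(h)$ is a harmless elaboration of what the paper leaves implicit; the one thing the paper flags that you omit is the remark that a map legalizing a long turn of branch length $C'$ also legalizes any containing long turn of length $C\geq C'$, but since in each case you fix a single $L$ matching Proposition~\ref{prop:local-legalizing}, your argument is self-contained without it.
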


\begin{proof} 
We use Proposition~\ref{prop:local-legalizing} to obtain the
hypothesis~(\ref{item:hyp-legalizing}) of
Proposition~\ref{prop:quote-1}, where we note that if $g_t$ legalizes
a long turn $t'$ with branch length $C' \geq 1$, then it also
legalizes any long turn $t$ with branch length $C \geq C'$ which
contains $t'$ as ``subturn''.

We note that hypothesis~(\ref{item:hyp-positive}) of
Proposition~\ref{prop:quote-1} is satisfied by the train track
morphism $h:\Gamma\to\Gamma$ from Proposition~\ref{prop:h-positive}.

Hypothesis~(\ref{item:hyp-hom-eq}) is satisfied, as has been shown in
Propositions~\ref{prop:h-positive} and \ref{prop:local-legalizing}.

Thus Proposition~\ref{prop:quote-1} assures us the existence of the
legalizing map $g$ as product of $h$ and the $g_t$.  Since by
Propositions~\ref{prop:h-positive} and \ref{prop:local-legalizing} the
latter are all homotopy equivalences that fix every vertex of $\Gamma$
and every gate of $\mathbf G$, this proves the claim.
\end{proof}

\begin{proof}[Proof of Theorem~\ref{thm:main}]
We note that the map $h: \Gamma \to \Gamma$ from
Section~\ref{sec:map-iwip-but-inp} satisfies by
Proposition~\ref{prop:h-positive} all of the requirements of the map
$f$ in Theorem~\ref{thm:quote-2}. By Corollary~\ref{cor:g-legalizing}
the same is true for the map $g$ obtained in
Corollary~\ref{cor:g-legalizing}.

Hence part~(\ref{item:thm-quote-2-iwip}) of Theorem~\ref{thm:quote-2}
shows that $h \circ g$ represents an iwip automorphism $\phi$ of $\FN
= \pi_1 \Gamma$, and part~(\ref{item:thm-quote-2-index}) assures that
the stable index list of $\phi$ is equal to the gate index list of
$\Gamma$ with respect to $\mathbf G$, which is built in
Section~\ref{sec:graph} to realize the given list of values $j_1,
j_2,\ldots,j_\ell$, see
Proposition~\ref{prop:graph-index-realization}.
\end{proof}

\begin{rem}
\label{rem:intrinsic-disappeared}
There is a subtlety in the above proof which we would like to point
out to the reader, concerning the topic ``given gate structure''
versus ``intrinsic gate structure'' (as defined in
Section~\ref{sec:prelim-CLP}).  It shows up in relation to two aspects
which are relevant in our context:
\begin{enumerate}
\item[(a)] The index of the automorphism represented by a train track
  morphisms $f: \Gamma \to \Gamma$ with respect to a gate structure
  $\mathbf G$ depends on the intrinsic gate structure $\mathbf G(f)$,
  which may well be strictly finer than $\mathbf G$.
\item[(b)] A map $g: \Gamma \to \Gamma$, which is a train track
  morphism with respect to two distinct gate structures $\mathbf G$
  and $\mathbf G'$, may well be legalizing with respect to $\mathbf G$
  but not with respect $\mathbf G'$.  (In this case, however, $\mathbf
  G$ must be strictly finer than $\mathbf G'$).
\end{enumerate}
In the situation considered above, both potential problems are
resolved as follows by the application of Theorem~\ref{thm:quote-2}:

The train track morphism $h$ constructed in
Section~\ref{sec:map-iwip-but-inp} may indeed well have an intrinsic
gate structure $\mathbf G(h)$ that is strictly finer than the
previously defined gate structure $\mathbf G$.  However, in Lemma~5.9
of~\cite{CL} it has been shown that a train track morphisms $g$ with
respect to a gate structure $\mathbf G$ which is legalizing (with
respect to $\mathbf G$) satisfies indeed $\mathbf G(g) = \mathbf
G$. Now, since the composition of any train track morphism with a
legalizing train track morphism is again legalizing
(see~\cite[Proposition~5.8]{CL}), by the same argument as before we
obtain automatically $\mathbf G(h \circ g) = \mathbf G$.
\end{rem}

\section{Discussion}
\label{sec:proof-discussion}

We will now discuss some further aspects of the index of free group automorphisms:

\subsection{The index deficit}

Handel and Mosher~\cite[\S1.5, Question~5]{HM-axes} ask what values
for the {\em index deficit} $N- \ind_{stab} \phi \, -1$ for any (iwip)
automorphism $\phi \in \Out(\FN)$ are possible, and whether for $N \to
\infty$ the maximal index deficit goes to $\infty$.

From our Theorem~\ref{thm:main} it follows directly that for every $N
\geq 3$ every value $j$ in $\frac{1}{2}\N$ with $0 \leq j \leq N -
\frac{3}{2}$ is achieved as index deficit for some iwip $\phi$.  The
maximal index deficit is hence equal to $N - \frac{3}{2}$, which
indeed tends to $\infty$ for $N \to \infty$. This result has
independently been obtained also by Sonya Leibman~\cite{leibman}.

\subsection{The index of geometric iwips}

We now consider in some detail the results of Masur and
Smillie~\cite{MS}, in particular their Theorem~2 on p.~291: The
translation of the terminology used there for quadratic differentials
and pseudo-Anosov homeomorphisms to the usual terminology for free
group automorphisms is not completely evident. We give here a bit of
translation help:

In the absence of punctures on the surface $M$, a $p_k$-pronged
singularity in~\cite{MS} translates into an isogredience class of
automorphisms $\Phi_k$ with $\ind \Phi_k = j_k = \frac{p_k -
  2}{2}$. In this case we would have to translate the genus $g$ of
$M$, multiplied by 2, into the rank $N$ of the free group, except that
without punctures $\pi_1 M$ is not free, which explains the summand
$-4$ in the index equality in part (a) of Theorem~2 of \cite{MS}.

Now, the $n$ punctures which Masur and Smillie admit in their
Theorem~2 appear nowhere explicitly, but in fact they can be
essentially anywhere: If a puncture lies outside of the singularities
and outside the separatrices raying out from them, then it lies on
some regular leaf of the stable foliation, and hence it becomes a
``$2$-prong singularity'', thus contributing a value $p_k = 2$ to the
given list.  The automorphisms $\Phi_k$ in the corresponding
isogredience class have two attracting fixed points on $\partial \FN$
and ${\rm rank(Fix} \,\,\Phi_k) = 1$, which adds up to $\ind \Phi_k =
1.$

If a puncture coincides with a singularity, say with $p_k$ prongs,
then, by the analogous reasoning, we obtain $\ind \Phi_k =
\frac{p_k}{2}$. This explains also why a value $p_k = 1$, which they
admit, does not lead to negative index of the corresponding
isogredience class: any singularity with a single prong only must
coincide with one of the punctures!

However, in the context of the paper here we have to add a further
restriction: A pseudo-Anosov surface homeomorphism induces an iwip
automorphism if and only if the surface has only one puncture, so that
we have in our context always the condition $n = 1$.

We come now to the 4 exceptional cases listed in part~(c) of their
Theorem~2: The first case $(6; -1)$ can not be realized by a
pseudo-Anosov map with non-orientable stable foliation, but according
to their Theorem~2 there must be a realization by a pseudo-Anosov with
orientable stable lamination.  The last case, $(\, \, \, ; -1)$
requires more than one puncture, so that it is ruled out by the
previous paragraph.  The third case, $(1, 3; -1)$, adds up to $g = 1$,
so that for $n =1$ one has $N = 2$: In this case all automorphisms are
geometric, and hence there is no chance to realize the corresponding
index list $[\frac{1}{2}, \frac{1}{2}]$.  However, in the paper here
we always assume $N \geq 3$, so that this case does not occur.

There 
remains the second exceptional case: $(3, 5; -1)$.  In this case we
have $g = 2$. From $n = 1$ we deduce the following two possibilities
for the index list: $[\frac{5}{2}, \frac{1}{2}]$ or $[\frac{3}{2},
  \frac{3}{2}]$, according to which of the two singularities coincides
with the puncture. The former index list is alternatively realized by
the case $(1, 7; -1)$ from~\cite{MS} (with the puncture at the
singularity with only 1 prong), which satisfies all conditions of
Theorem~2 of~\cite{MS}. The other index list, $[\frac{3}{2},
  \frac{3}{2}]$, however, leads always back to their second
exceptional case $(3, 5; -1)$, and hence can not be realized by a
geometric automorphism. 
There remains as a 
last ``left-over mystery'' of
the index realization problem for $N \geq 3$ the question whether the
index list $[\frac{3}{2}, \frac{3}{2}]$ can perhaps be realized by a
parageometric automorphism of $F_4$.

\subsection{Some numerical experiments regarding the stable index}

Our realization result naturally leads to the question of frequency of
the different index lists. Thanks to the program developed by the
first author in Python and Sage we were able to do the following
numerical experiments.

Fix a finite alphabet $A$ with $N$ letters and let $F_A$ be the free
group on $A$.  With Convention~\ref{convention:no-untouched-edges}, an
elementary Nielsen automorphism is given by $a\mapsto ab$ with $a,b\in
A^{\pm 1}$, $a\neq b$ and $a\neq b^{-1}$. Recall that elementary
Nielsen automorphisms form a generating set of $\Aut(F_A)$ and
$\Out(F_A)$.

Our program tries random products of $L$ elementary Nielsen
automorphisms, that is to say they approximate the random walk on
$\Out(F_A)$ for this generating set. 

Each line of the table 
below 
corresponds to a sample size of $100$ computed
random automorphisms.  Computations were made at the math department
in Marseille, without compiling the Sage code nor looking for serious
optimization.

Note that those automorphisms commonly involve words with several
thousands of letters. Note also that those frequencies are 
not 
completly significant. First, Rivin~\cite{rivin} (see also
\cite{sisto}) proved that the frequency of iwips goes to $100\%$ when
the number of elementary Nielsen automorphisms in the product goes to
infinity (but $26$ or even $41$ are quite small compare to
infinity). Different tests with the above data may lead to slightly
different results. However, what seems to be significant is that:
\begin{enumerate}
\item Automorphisms with small indices are much more frequent that
automorphisms with high indices.
\item Automorphisms with index greater than half the theoretical
  maximal ($\frac{N-1}{2}$) almost never occur. In particular the
  maximal index $N-1$ never occured in our tests out of thousands of
  tries.
\item Several index lists seem to share positive frequency.
\item The smallest index list: $[\frac 12]$, is not always the most frequent.
\end{enumerate}
We have no clue on how to prove or disprove such experimental
observations.

\bigskip
\noindent
\begin{center}
\begin{tabular}{|c|c|c|c|c|c|c|c|c|}
\hline
&&\multicolumn{6}{c|}{Frequency}&\multirow{3}{*}{\parbox{2cm}{\begin{center}Computation\\ time\end{center}}
}\\
\cline{3-8}
N&L&\multirow{2}{*}{iwips}&\multicolumn{5}{c|}{most frequent index lists among iwips}&\\
\cline{4-8}
&&&$[\frac 12]$&$[\frac 12,\frac 12]$&$[1]$&$[\frac 12,\frac 12,\frac 12]$&$[\frac 12,\frac 12,\frac 12,\frac 12]$&\\
\hline
3&26&100\%&64\%&34\%&1\%&0\%&0\%&2 min\\
\hline
4&26&97\%&47\%&34\%&3\%&14\%&1\%&4 min\\
\hline
5&26&93\%&29\%&32\%&3\%&28\%&5\%&7 min\\
\hline
6&29&95\%&21\%&29\%&6\%&20\%&9\%&15 min\\
\hline
7&34&91\%&17\%&26\%&9\%&25\%&7\%&23 min\\
\hline
8&36&84\%&13\%&19\%&7\%&17\%&18\%&31 min\\
\hline
9&39&78\%&11\%&6\%&11\%&18\%&10\%&46 min\\
\hline
10&41&76\%&3\%&8\%&8\%&5\%&8\%&1h5min\\
\hline
\end{tabular}
\end{center}

\smallskip

\bigskip

\noindent Institut de mathématiques de Marseille\\ 
Université d'Aix-Marseille\\
39, rue Frédéric Joliot Curie \\
13453 Marseille Cedex 13\\
France\\
\href{mailto:thierry.coulbois@univ-amu.fr}{\nolinkurl{thierry.coulbois@univ-amu.fr}}\\
\href{mailto:martin.lustig@univ-amu.fr}{\nolinkurl{martin.lustig@univ-amu.fr}}

\end{document}